\definecolor {processblue}{cmyk}{0.96,0,0,0}
\theoremstyle{plain}
\newtheorem{theoremIntro}{Theorem}
\newtheorem{proposition}{Proposition}[section]
\newtheorem{theorem}[proposition]{Theorem}
\newtheorem{corollary}[proposition]{Corollary}
\newtheorem{lemma}[proposition]{Lemma}
\theoremstyle{definition}
\newtheorem{definition}[proposition]{Definition}
\newtheorem{example}[proposition]{Example}
\theoremstyle{remark}
\newtheorem{remark}[proposition]{Remark}
\newcommand\restr[2]{{
  \left.\kern-\nulldelimiterspace
  #1
  \vphantom{\big|}
  \right|_{#2}
  }}
\DeclarePairedDelimiterX{\innerp}[2]{\langle}{\rangle}{#1,#2}
\newcommand{\NN}{\mathbb{N}}
\newcommand{\ZZ}{\mathbb{Z}}
\newcommand{\HH}{\mathbb{H}}
\newcommand{\RR}{\mathbb{R}}
\newcommand{\DD}{\mathbb{D}}
\newcommand{\cG}{\mathcal{G}}
\newcommand{\cat}[1]{\textup{\bfseries #1}}
\newcommand{\Bord}{\cat{Bord}}
\newcommand{\PF}{\cat{PF}}
\newcommand{\PRF}{\cat{PRF}}
\newcommand{\Hom}{\textup{Hom}}
\tikzstyle{mytheorembox} = [draw=vdgreen, fill=blue!20, very thick, rectangle, rounded corners, inner sep=10pt, inner ysep=15pt]
\tikzstyle{mytheoremfancytitle} =[fill=vdgreen, text=white]
\definecolor{vdblue}{rgb}{0,0,.3}
\definecolor{dblue}{rgb}{0,0,.7}
\definecolor{lblue}{rgb}{.3,.3,1}
\definecolor{vlblue}{rgb}{.7,.7,1}
\definecolor{vvlblue}{rgb}{.9,.9,1}
\definecolor{vdred}{rgb}{.3,0,0}
\definecolor{dred}{rgb}{.7,0,0}
\definecolor{lred}{rgb}{1,.3,.3}
\definecolor{vlred}{rgb}{1,.7,.7}
\definecolor{vdgreen}{rgb}{0,.2,0}
\definecolor{dgreen}{rgb}{0,.4,0}
\definecolor{lgreen}{rgb}{.3,1,.3}
\definecolor{vlgreen}{rgb}{.7,1,.7}
\definecolor{lyellow}{rgb}{1,1,.3}
\definecolor{gray1}{rgb}{0.22,0.22,0.22}
\definecolor{gray2}{rgb}{0.28,0.28,0.28}
\definecolor{gray3}{rgb}{0.36,0.36,0.36}
\definecolor{gray4}{rgb}{0.44,0.44,0.44}
\definecolor{gray5}{rgb}{0.52,0.52,0.52}
\definecolor{gray6}{rgb}{0.6,0.6,0.6}
\definecolor{gray7}{rgb}{0.68,0.68,0.68}
\definecolor{gray8}{rgb}{0.76,0.76,0.76}
\definecolor{color1}{rgb}{1,0,0}
\definecolor{color2}{rgb}{0.98,0,0.816}
\definecolor{color3}{rgb}{0.717,0,1}
\definecolor{color4}{rgb}{0,0,1}
\definecolor{color5}{rgb}{0,1,1}
\definecolor{color6}{rgb}{0,1,0}
\definecolor{color8}{rgb}{1,1,0}
\definecolor{color7}{rgb}{1,0.651,0}
\let\old@enddoc@text\enddoc@text
\renewcommand{\enddoc@text}{%
  \ifx\@setthanks\@empty\else
    \par
    \footnotesize
    \skip@20\p@ \advance\skip@-\lastskip \advance\skip@-\baselineskip
    \vskip\skip@
    \@setthanks
  \fi
}
\newcommand{\printauthorinfo}{%
  \par\vspace{1em}  %
  \begingroup
  \footnotesize
  \@setaddresses
  \endgroup
}
\begin{document}
\title{Topological Kleene Field Theories as a model of computation}

\keywords{Dynamical systems, Partial Recursive Functions, computability, bordisms.}

\subjclass[2020]{Primary: 37B10, Secondary: 37C10, 03D10, 68Q04, 18N10}

\author{Ángel González-Prieto}
\address{ \'Angel Gonz\'alez-Prieto, Facultad de CC.\ Matem\'aticas, Universidad Complutense de Madrid, 28040 Madrid, Spain and Instituto de Ciencias Matem\'aticas (CSIC-UAM-UCM-UC3M),
28049 Madrid, Spain.}
\email{angelgonzalezprieto@ucm.es}
\author{Eva Miranda}\address{Eva Miranda,
Laboratory of Geometry and Dynamical Systems \& SYMCREA, Department of Mathematics, EPSEB, Universitat Polit\`{e}cnica de Catalunya-IMTech
in Barcelona and
\\ CRM Centre de Recerca Matem\`{a}tica, Campus de Bellaterra
Edifici C, 08193 Bellaterra, Barcelona.
 }%
 \thanks{Corresponding author: Eva Miranda. Email: \texttt{eva.miranda@upc.edu}}
 \email{eva.miranda@upc.edu}
 \author{Daniel Peralta-Salas} \address{Daniel Peralta-Salas, Instituto de Ciencias Matem\'aticas, Consejo Superior de Investigaciones Cient\'ificas,
28049 Madrid, Spain.}
\email{dperalta@icmat.es}

\begin{abstract} In this article, we establish the foundations of a computational field theory, which we term \textit{Topological Kleene Field Theory} (TKFT), inspired by Stephen Kleene’s seminal work on partial recursive functions and drawing parallels with Topological Field Theory. Our central result shows that any computable function can be simulated by the flow on a smooth bordism of a vector field with good local properties, setting an alternative model of computation to Turing machines. 
We thus establish that a computable function can be fully realized within a single go of a dynamical system, differing from previous works where computation is encoded as an iterative process. The output of the computable function emerges directly, laying the groundwork for potential applications that accelerate the physical realization of computation.

\end{abstract}

\maketitle

\section{Introduction}

The theory of computation has traditionally been grounded in discrete models, from Turing machines to lambda calculus, each foundational to understanding the limits and potential of computable functions. Yet, as computational complexity and dynamics interact across fields like fluid mechanics and quantum physics, there is a growing interest in bridging discrete computation with continuous systems. In this direction, one of the richest facets of computability theory is the existence of multiple models of computation. Over the last century, various definitions of computation have been introduced in the literature, such as Turing Machines \cite{turing1936computable}, $\lambda$-calculus \cite{church1932set}, combinatory logic \cite{curry1930grundlagen}, or counter machines \cite{shepherdson1963computability}, among others. Despite the significant differences in their nature, all these models can be shown to have equivalent computational power. This equivalence forms the basis of the Church-Turing thesis, a philosophical principle asserting that all models of computation are, at most, as powerful as a Turing machine.

One of the most interesting proposals in the model of partial recursive functions, as introduced by Kleene in \cite{kleene1936general}, is that computability must be understood as output functions for Turing machines. These are the mathematical abstractions of computers, comprised of a reading/writing head that manipulates a tape that works as an auxiliary memory. In this manner, a computable function is a partial function $f: \NN \dashrightarrow \NN$ that can be calculated by executing a Turing machine. Kleene proved that these computable functions coincide with partial recursive functions, which can be easily described as composition of some basic and explicit functions.

In this work, we show that it is possible to represent any computable function as the flow of a smooth vector field. Specifically, given such $f: \NN \dashrightarrow \NN$, we construct a smooth bordism $W_f$ with corners between two $2$-dimensional disks and a vector field $X$ transverse to the disks and tangent to the rest of the boundary (see Figure \ref{fig:bordism-dynamics}) such that the reaching function of the flow of $X$ at the boundary coincides with $f$.

\begin{center}
    \includegraphics[width=0.4\linewidth]{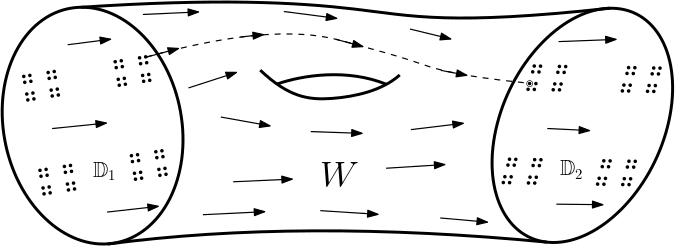}
    \captionof{figure}{Dynamical bordism with generated flow.}
    \label{fig:bordism-dynamics}
\end{center}

To be precise, on each boundary disk $\DD$, we encode a copy of the natural numbers through an embedding $\NN \hookrightarrow \DD$, for instance, by encoding every natural number according to its binary expression, as in a square Cantor set. This defines a partial function $Z_0(W, X): \NN \dashrightarrow \NN$, called the `reaching function', as follows: Given $n \in \NN$ in the input disk, we consider the flow of $X$ starting at $n$; if the flow hits $\DD_2$ at a point $n' \in \NN$ of the output disk, we set $Z_0(W, X) (n) = n'$, otherwise the result is undefined.

\subsection*{Related work}

The idea of investigating the interplay between dynamics and computation has been widely explored in the literature since the pioneering works of Moore~\cite{moore1990unpredictability,moore1991generalized}. In this direction, dynamical systems have been shown to exhibit computational abilities, such as solving linear programming problems \cite{brockett1991dynamical}, undecidability in classical mechanics \cite{da1991undecidability}, the non-computability of Julia sets \cite{braverman2006non}, the simulation of $\lambda$-calculus using a chemical-inspired machine \cite{berry1989chemical}, the construction of Turing-complete cellular automata \cite{wolfram1983statistical,wolfram2003new,wolfram2019cellular}, or the encoding of Turing machines in reaction-diffusion models \cite{bandini2005computing}. This latter approach of embedding Turing machine dynamics into particular dynamical systems has been proven useful in understanding the complexity of the solutions to differential equations, as in the context of analytic or polynomial families of ODEs~\cite{koiran1994computability,koiran1999closed,gracca2008computability,bournez2013computation,gracca2023analytic,gracca2024robust,gracca2005robust}, in \cite{tao2017universalitywell} for a potential well, or \cite{tao2017universality,tao2019universality} for the Euler equations modeling the dynamics of an ideal fluid flow.

Inspired by these ideas, T.\ Tao suggested that the complexity of the Navier-Stokes equations modeling realistic fluids could be studied by understanding the encoding of universal Turing machines as part of the flow. This idea has been exploited further in \cite{CMPP,CMP1,CMP2,CMPP2}, where it was proven that such universal Turing machines can be embedded within the flow of a steady state of the Euler equations in a $3$-dimensional manifold, or in \cite{dyhr2025turing} for steady states of the Navier–Stokes equations via cosymplectic geometry.

However, despite the interest of these works, the approach taken in this paper is deeply different: we do not aim to encode a Turing machine as a flow, which would require to be iterated many times to compute the final result; instead, we prove that the actual computable function can be simulated using a dynamical system in such a way that the outcome of the Turing machine can be represented in a single go of the flow.

\subsection*{Main result} The main theorem established in this work is the following.

\begin{theoremIntro}\label{thm:main-theorem-intro}
    For any partial recursive function $f: \NN \dashrightarrow \NN$, there exists a clean dynamical bordism $(W,X)$ between disks, such that $X$ is a volume-preserving vector field and $Z_0(W, X) = f$. Conversely, the reaching function of any clean dynamical bordism is a partial recursive function. Accordingly, the category of clean dynamical bordisms is equivalent to that of computable functions. 
\end{theoremIntro}

{In this theorem, ``dynamical bordism'' means that the vector field $X$ points inwards at the input disk and outwards at the output disc, and it is tangent to the rest of the boundary. The ``cleanness'' property is a technical condition that ensures that the vector field is ``well-behaved'' in the sense that its zero set consists of finitely many points, and there is a ``skeleton'' where the flow of $X$ is generated by simple diffeomorphisms of disks (see Section \ref{sec:pf-bordisms} for precise definitions). 

It is worth noticing that the construction of Theorem \ref{thm:main-theorem-intro} does require a non-trivial topology in the bordism to capture any partial recursive function. If we restrict ourselves to the class of bordisms obtained by taking the suspension of a diffeomorphism $\varphi: \mathbb{D} \to \mathbb{D}$ into its mapping cylinder $(W_\varphi, X_\varphi)$, the reaching map is exactly $\varphi|_{\NN}$ and thus it is continuous in the subspace topology for $\NN \subseteq \DD$. But not all the computable functions are continuous in this topology. The reaching map of a general bordism may instead be discontinuous, providing the needed flexibility.
}

\subsection*{Sketch of the proof} For the proof of Theorem \ref{thm:main-theorem-intro}, we deeply exploit an alternative representation of a Turing machine as a finite state machine. In this way, given a Turing machine $M_f$ computing a partial recursive function $f: \NN \dashrightarrow \NN$, we express $M_f$ as a directed finite graph where its nodes correspond to the states of $M_f$ and the edges represent the simple operations performed by the machine at each step. Encoding the tape as a point in the disk through a Cantor set-like embedding, we prove that these simple operations can be implemented using an explicit diffeomorphism. 

Hence, from the graph representing $M_f$, we create a CW-complex $W_f$ by placing a disk for each node and a mapping cylinder for each edge, representing the operation performed by the edge, as depicted in Figure \ref{fig:thickening}. Using the natural vector field produced by a homotopy between the identity and each of the edge diffeomorphisms, we endow the CW-complex with a vector field. Then, by smoothing $W_f$ into a homotopically equivalent smooth manifold and doing a suitable (volume-preserving) extension of the flow, we get the desired bordisms whose dynamics represent the computable function $f$.
A detailed proof of this result is provided in Section \ref{sec:prf-quantizable} and Theorem \ref{thm:clean-kleen}.

\begin{center}
    \includegraphics[width=0.4\linewidth]{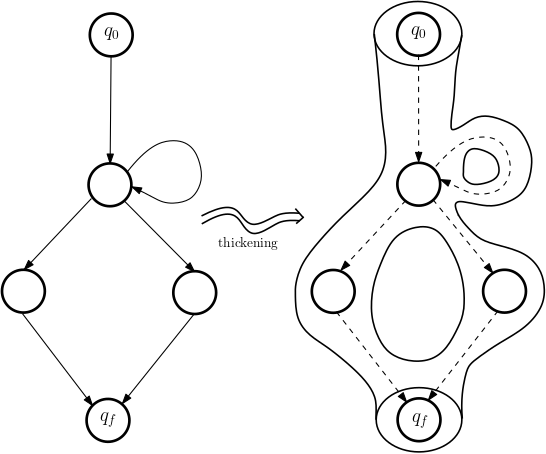}
    \captionof{figure}{Thickening process to represent a partial recursive function as a dynamical bordism.}
    \label{fig:thickening}
\end{center}
    
We underscore that the thickening construction used in the proof of Theorem \ref{thm:main-theorem-intro} is fully explicit and amenable, enabling effective simulation; cf. \cite{Ramos_2025}. In this direction, Figure \ref{fig:simulation} shows a dynamical bordism simulating a partial recursive function, and different trajectories corresponding to four different inputs, depicted in different colours.
The planes inside the bordism serve as computational checkpoints, each corresponding to a state of the underlying Turing machine.

\begin{center}
    \includegraphics[width=0.35\linewidth]{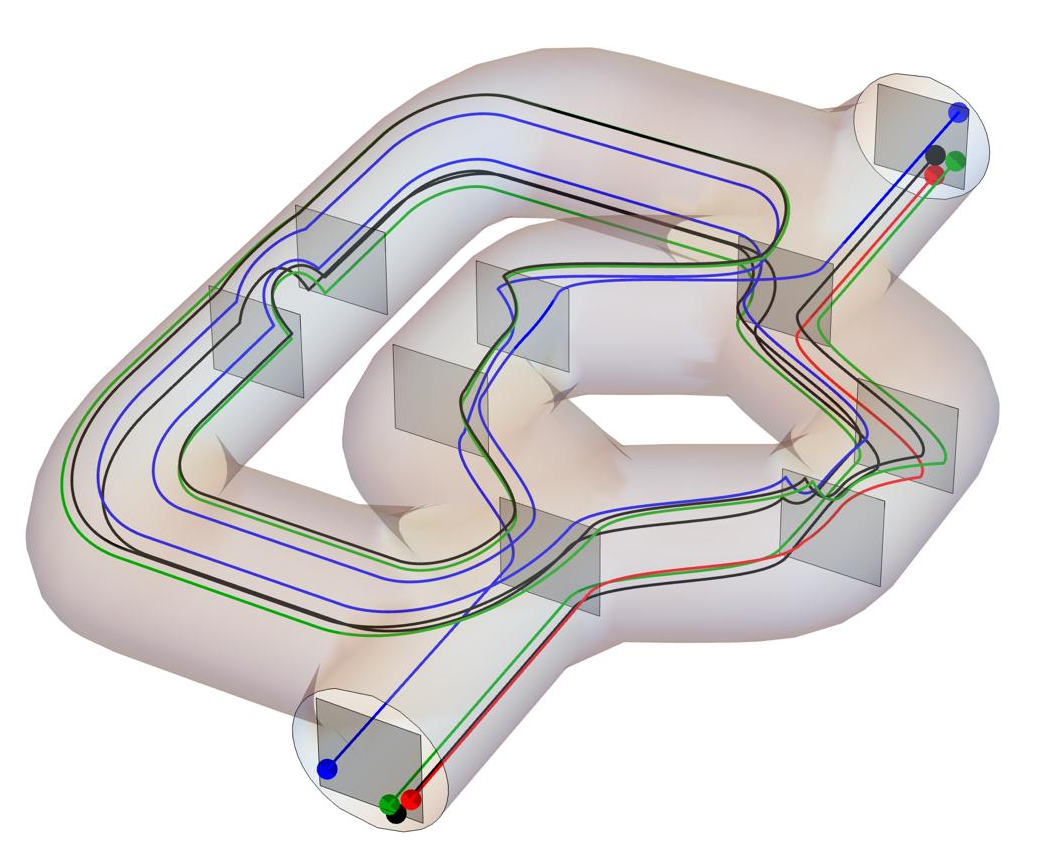}
    \captionof{figure}{Simulation of a partial recursive function through a dynamical bordism, reproduced from \cite{Ramos_2025}.}
    \label{fig:simulation}
\end{center}

\subsection*{Implications} Theorem \ref{thm:main-theorem-intro} has several consequences in the theory of dynamical systems. For instance, it shows that some problems in dynamics, such as determining whether the flow of a point will reach another point or even if the flow will be trapped, are undecidable due to the halting problem. Additionally, Theorem \ref{thm:main-theorem-intro} means that the class of volume-preserving vector fields is Turing complete in the sense that any computable function can be simulated by the flow of a volume-preserving field on a bordism as above.

It is worth mentioning that Theorem \ref{thm:main-theorem-intro} can be reinterpreted from a categorical perspective, providing a computational analogue of Topological Quantum Field Theories (TQFTs) \cite{witten1988topological,atiyah1988topological,witten1989quantum}, as we shall explore in Section \ref{sec:categorical-interpretation}. Recall that TQFTs are a mathematical formulation of quantum field theories independent of the underlying metric purely in terms of category theory by using the category of bordisms. In this setting, we show that the reaching function $Z_0$ leads to a functor of $2$-categories between the category of dynamical bordisms and the category of partial functions. Due to this parallelism with Topological Quantum Field Theories, we decided to refer to the construction of Theorem \ref{thm:main-theorem-intro} as a \emph{Topological Kleene Field Theory} (TKFT for short). Indeed, in his ``Lectures on Computation'', Richard Feynman delves into the algebraic structures underpinning quantum computing, drawing parallels to TQFT.

TKFTs represent a new model of computation in which computable functions are central objects that can be executed by following the flow of a vector field. Under this point of view, it opens new perspectives in complexity theory, since the computational complexity of the function can be related to the topological complexity of the bordism and the vector field. %
Furthermore, even though the proof developed in this work to establish Theorem \ref{thm:main-theorem-intro} requires the use of an auxiliary Turing machine, there may exist other clean dynamical bordisms with the same properties, but whose dynamics are constructed using different techniques. These non-Turing bordisms could provide a way of speeding-up the computation of certain partial recursive functions, outperforming the efficiency of classical computation, with a view towards a physically realizable beyond-Turing model of calculation.

\subsection*{Acknowledgments}
The authors were supported by the Spanish State Research Agency through projects PID2021-124440NB-I00, PID2023-146936NB-I00, RED2022-134301-T, and project PID2022-136795NB-I00 under MCIN/AEI/10.13039/501100011033 and  ERDF/EU. EM was supported by the Catalan Institution for Research and Advanced Studies through an ICREA Academia Prize 2021. All authors were partially supported by the project Computational, Dynamical and Geometrical Complexity in Fluid Dynamics (COMPLEXFLUIDS), Fundación BBVA 2021, and the bilateral AEI–DFG projects PCI2024-155042-2 and PCI2024-155062-2. This work also benefited from the Severo Ochoa and María de Maeztu Program for Centres and Units of Excellence in R\&D (CEX2020-001084-M and CEX2023-001347-S) of the Spanish State Research Agency.

\section{The Church-Turing Thesis}\label{sec:church-turing}

In this section, we shall review two of the most important models of computability that will play a major role in this work: Turing machines and partial recursive functions.

\subsection{Turing Machines}\label{sec:turing-machines}

This celebrated computability model captures the essence of a computer from a mathematical point of view. Due to its intuitive nature, it has become one of the most widely used models of computation. Formally, a (binary) \emph{Turing machine} is a tuple of data $M = (Q, q_0, Q_{\textrm{halt}}, \delta)$, where $Q$ is a finite set called the \emph{states} of $M$, $q_0 \in Q$ is the \emph{initial state}, $Q_{\textrm{halt}} \subsetneq Q$ is the set of \emph{halting states} and
$$
    \delta: Q \times \mathcal{A} \to Q \times \mathcal{A} \times \{-1,1\}
$$
is called the \emph{transition function}, with $\mathcal{A} = \{0,1, \square\}$ the alphabet of the tape. Here $\square$ must be seen as a ``blank'' character.

With this information, we can form a dynamical system as follows. Let us denote by $\Lambda \subseteq \mathcal{A}^{\ZZ}$ the set of finite two-sided sequences $t: \ZZ \to \mathcal{A} = \{0,1, \square\}$, i.e.,\ such that $t_n \neq \square$ only for finitely many $n \in \ZZ$. The elements $t \in \Lambda$ will be called the \emph{tape states}. The computation states are the set $\mathcal{S} = \{(t,q) \in \Lambda \times Q\}$, with $t \in \Lambda$ a tape state and $q \in Q$ the current state of the Turing machine. Then, we define the function
$$
    \Delta_M: \mathcal{S} \to \mathcal{S}
$$
as follows. Given a computation state $(t, q)$, the Turing machine computes $\delta(q, t_0) = (q', s, \epsilon)$ and sets $\Delta_M(t,q) = (t', q')$, where $t'$ is the tape state with $t'_n = t_{n+\epsilon}$ for $n \neq -\epsilon$ and $t'_{-\epsilon} = s$.

This function $\Delta_M$ represents the intuitive idea that $t$ is a two-sided tape with finitely many non-black symbols and $M$ has a reader-writer head placed on the $0$-th position of the tape. At each step, the head of $M$ reads the symbol under it and, according to the internal state $q$ of the machine, replaces it with the new symbol $s$ on the tape, shifts the head to the left ($\epsilon = -1$, or equivalently, the sequence to the right) or right ($\epsilon = 1$, or the sequence to the left) and the machine changes its internal state to $q'$. In this vein, given an initial tape $t$, the function $\Delta_M$ defines a dynamical system on $\mathcal{S}$ starting in $(t, q_0) \in \mathcal{S}$. If, at some point, the system reaches a state of the form $(t', q')$ with $q' \in Q_{\textrm{halt}}$, the dynamical system stops and we say that $M$ \emph{halts} with tape $t'$. Otherwise, we say that $M$ \emph{does not halt}.

\begin{remark}
    In principle, more general Turing machines can be considered if we allow for more symbols in the alphabet $\mathcal{A}$ of the tape rather than just $\mathcal{A} = \{0, 1, \square \}$. However, it can be shown that these machines with a larger alphabet are equivalent to a binary Turing machine, just by binary encoding each symbol in the tape. For this reason and their simplicity, we shall focus on binary machines in this work.
\end{remark}

\subsection{Partial Recursive Functions}

In this work, we shall specifically focus on a particular model of computability known as partial recursive functions, as introduced by Kleene in \cite{kleene1936general}. 

In this context, by a \emph{partial function} $f$ between sets $A$ and $B$ we mean a function $f: D_f \to B$ defined on a certain subset $D_f \subseteq A$, called the \emph{domain} of $f$. Equivalently, we can turn any partial function into a usual one by choosing a new element $\bullet$, setting $A^* = A \cup \{\bullet\}$ and $B^* = B \cup \{\bullet\}$, and defining the usual function $\tilde{f}: A^* \to B^*$ by $\tilde{f}(a) = f(a)$ if $a \in D_f$ and $\tilde{f}(a) = \bullet$ if $a \not\in D_f$. 
A partial function between $A$ and $B$ will be denoted by $f: A \dashrightarrow B$. Observe that two partial functions $f: A \dashrightarrow B$ and $g: B \dashrightarrow C$, with respective domains $D_f$ and $D_g$, can be composed as usual after restricting the domain of $f$ to $D_f \cap f^{-1}(D_g)$.

In this manner, the set of \emph{partial recursive functions} is the minimum set of partial functions $f: \NN^n \dashrightarrow \NN^m$ closed under composition and containing (1) the constant functions, (2) the projection functions, (3) the successor function, (4) primitive recursion functions and (5) minimization functions (see \cite{kleene1936general} for a detailed description). Furthermore, using the natural embedding $\iota: \NN^n \to \NN$ given by $\iota(x_1, \ldots, x_n) = p_1^{x_1} p_2^{x_2} \cdots p_n^{x_n}$, where $p_i$ denotes the $i$-th prime number, we can restrict ourselves to partial recursive functions of the form $f: \NN \dashrightarrow \NN$. Not every function is a partial recursive function, being an example the so-called busy beaver function \cite{rado1962non}.

The set of partial recursive functions represents a model of computation equivalent to the one given by Turing machines. Indeed, 
consider a binary Turing machine $M = (Q, q_0, Q_{\textrm{halt}}, \delta)$. From $M$, we can create the partial function $f_M: \NN \dashrightarrow \NN$ given by
$$
    f_M(x) = \left\{\begin{array}{ll}
        y & \textrm{if $M$ halts when $x$ is used as initial value and returns $y$},\\
        \bullet & \textrm{if $M$ does not halt when $x$ is used as initial value.}
    \end{array}\right. 
$$
In the description above, we encode $x$ in the initial tape through its binary form, with the least significant bit of $x$ on the left, and the head of the machine is placed on this least significant bit. The output is read in the same way: since only a finite number of cells in the tape are changed before $M$ halts, only a finite number of non-blank cells appear to the right of the position of the head after halting. These cells to the right of the head are seen as the encoding of a binary number $y$, which is interpreted as the output of $M$. In this manner, the function $f_M$ described above is said to be \emph{implemented} by the Turing machine $M$.

In this language, Kleene proved \cite{kleene1936general} that the function $f_M: \NN \dashrightarrow \NN$ implemented by a binary Turing machine $M$ is a partial recursive function. Reciprocally, every partial recursive function can be implemented via a Turing machine. Notice that the correspondence of the previous result is not bijective as different Turing machines can implement the same partial recursive function.

\subsection{Turing Machines as finite state machines}\label{sec:graph-automata}

There exists a geometric way of understanding Turing machines as graphs, typically referred to as \emph{finite state machines}. Suppose that we have a Turing machine $M = (Q, q_0, Q_{\textrm{halt}}, \delta)$. Then, we define the finite state machine $\cG_M$ associated to $M$ as the following labelled directed graph:
\begin{itemize}
    \item The vertices of $\cG_M$ are the states $Q$ of $M$.
    \item Every vertex $q \not\in Q_{\textrm{halt}}$ has three different outgoing edges, corresponding to the values of $\delta(q, 0) = (q', s', \epsilon')$, $\delta(q, 1) = (q'', s'', \epsilon'')$ and $\delta(q, \square) = (q''', s''', \epsilon''')$. In this way, we place an edge between $q$ and $q'$ labelled with the triple $(0, s', \epsilon')$, an edge between $q$ and $q''$ labelled with the triple $(1, s'', \epsilon'')$, and an edge between $q$ and $q'''$ labelled with  $(\square, s''', \epsilon''')$.
    \item The halting states of $Q_{\textrm{halt}}$ have no outgoing edges.
\end{itemize}
Notice that $\cG_M$ may contain loops if transitions of the form $\delta(q,s)=(q,s',\epsilon')$ exist.

The finite state machine $\cG_M$ associated to the Turing machine $M$ allows us to visualize the changes in the internal states of $M$ while computing. At the beginning of the computation, the system starts at the initial vertex $q_0$ and, at each step, the system transitions to a new vertex through the edge corresponding to the symbol read in the tape at that moment, writing the tape and moving left or right according to the label of the edge.

\begin{example}
Let us consider a Turing machine with three states $Q = \{q_0, q_1, q_2\}$, initial state $q_0$, halting states $Q_{\textrm{halt}} = \{q_2\}$ and transition function given by
$$
    \begin{array}{lclclclclcl}
        \delta(q_0, 0) & = & (q_0, 0, +1), & \quad & \delta(q_0, 1) & = & (q_1, 0, +1), & \quad & \delta(q_0, \square) & = & (q_2, 1, -1), \\
        \delta(q_1, 0) & = & (q_1, 0, +1), & \quad & \delta(q_1, 1) & = & (q_0, 0, +1), & \quad & \delta(q_1, \square) & = & (q_2, 0, -1). \\
    \end{array}
$$
This Turing machine is designed to read a string of 0's and 1's and to return $1$ if it finds an even number of 1's and $0$ otherwise. According to the previous description, the associated finite state machine is the one depicted in \cref{fig-example-graph}. The halting states are highlighted with a double border, and the initial state is marked with an incoming edge from no node.

\begin{figure}[h!]
    \centering
    \begin {tikzpicture}[-latex ,auto ,node distance =3 cm and 3cm ,on grid ,
semithick ,
state/.style ={ circle ,top color =white , bottom color = processblue!20 ,
draw,processblue , text=blue , minimum width =1 cm}]
\node[state,accepting] (C)
{$q_2$};
\node[state,initial by arrow, initial text={}, initial above] (A) [above left=of C] {$q_0$};
\node[state] (B) [above right =of C] {$q_1$};
\path (A) edge  node[left] {$(\square,1,-1)\;$} (C);
\path (A) edge [bend left =25] node[above] {$(1,0,+1)$} (B);
\path (B) edge [bend left =15] node[below =0.15 cm] {$(1,0,+1)$} (A);
\path (B) edge node[right] {$\;(\square,0,-1)$} (C);
\path (A) edge [loop left] node[left] {$(0,0,+1)$} (A);
\path (B) edge [loop right] node[right] {$(0,0,-1)$} (B);
\end{tikzpicture}
    \caption{Finite state machine for the parity detector Turing Machine.}
    \label{fig-example-graph}
\end{figure}
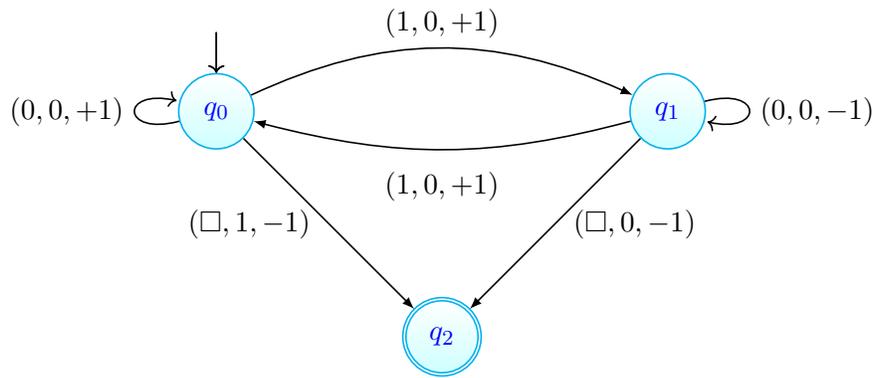
\end{example}

\section{Dynamical computation}\label{sec:TKFT}

In this section, we introduce a new tool to understand computability from a geometric point of view. As we shall show, this provides a framework to reinterpret several computability questions, such as complexity and their limitation. However, to define these concepts properly, we will need some geometric and combinatorial background.

\subsection{Dynamical bordisms}

Throughout this paper, an $n$-dimensional \emph{manifold} with ($2$-step) corners will be a second countable Hausdorff topological space $W$ with local charts $\varphi: U \subseteq M \to V \subseteq \HH_2^n$ into open sets of $\HH_2^n = \{(x_1, \ldots, x_n) \in\RR^n\mid x_{n-1},x_n \geq 0\}$ and smooth transition functions. Given a point $p \in M$ with image $\varphi(p) = (x_1, \ldots, x_n)$, if $x_{n-1} > 0$ and $x_n > 0$, we say that it is an \emph{interior point}; if $x_{n-1} = 0$ and $x_{n} > 0$, or $x_{n-1} > 0$ and $x_{n} = 0$, we call it a \emph{frontier point}; and if $x_{n-1} = x_n = 0$, we refer to it as a \emph{corner point}. The set of frontier points will be denoted by $\partial_1 W$ and the set of corner points by $\partial_2 W$. Additionally, the set $\partial W = \partial_1 W \cup \partial_2 W$ will be called the \emph{boundary} of $W$. Notice that every component of $\partial_1 W$ is a usual manifold with boundary, and $\partial W$ is the result of non-smoothly gluing these components along their boundaries.

In this setting, given two manifolds $M_1$ and $M_2$ with boundary, a \emph{bordism} between them will be a manifold with corners $W$ such that we can decompose its boundary as $\partial W = M_1 \sqcup M_2 \sqcup \partial'W$, where $\partial'W \subseteq \partial_1W$ is an open manifold without boundary. Additionally, a smooth vector field $X$ on the bordism $W$ is said to be \emph{tame} if $X$ is transverse to $M_1$ and $M_2$ and tangent to $\partial' W$. Furthermore, notice that, for $i = 1, 2$, we have a splitting of the tangent bundle $TW|_{M_i} = TM_i \oplus \RR$ as a vector bundle over $M_i$, and thus the tameness condition implies that $X|_{M_i}$ is a nowhere vanishing vector field in the ``normal'' direction.

\begin{definition}
    A \emph{dynamical germ} is a pair $(M, X)$ of an $n$-dimensional compact manifold with boundary together with a non-zero smooth vector field $X$ on $TM \oplus \RR$ with $X = (V, s)$ and $s\equiv s(p) < 0$ for all $p\in M$. 
    Given dynamical germs $(M_1, X_1)$ and $(M_2, X_2)$, a \emph{dynamical bordism} between them is a pair $(W, X)$ where $W$ is an $(n+1)$-dimensional compact bordism with corners between $M_1$ and $M_2$ and $X$ is a tame vector field on it such that $s|_{M_1} = -s_1$ and $s|_{M_2} = s_2$, where we are decomposing $X_i=(V_i,s_i)$ as before.
\end{definition}

For a dynamical germ $(M, X)$, we should think about the vector field $X$ as pointing ``outwards'' when $M$ is seen as a boundary of a bordism. For this reason, that $s|_{M_1} = -s_1$ for a bordism $(W, X)$ between $(M_1, X_1)$ and $(M_2, X_2)$ means that $X$ points ``inwards'' at $M_1$, while $s|_{M_2} = s_2$ means that it points ``outwards'' at $M_2$.

\subsection{Cantor-based encodings}\label{sec:cantor-encoding}

{
The standard ternary Cantor set is the set of real numbers $x\in \mathbb{R}$ of the form $x = \sum_{i=1}^\infty \epsilon_i 3^{-i}$, with $\epsilon_i = 0$ or $2$. This Cantor set can be used to encode natural numbers (through their binary expansion) in the real line or, even more interestingly, paths in a binary tree ($\epsilon_i = 0$ means that we turn left in the $i$-th branch, and $\epsilon_i = 2$ that we turn right).

Using these ideas, we can define a useful encoding of finite binary sequences with symbols from the alphabet $\mathcal{A} = \{0,1, \square\}$ into the closed interval $I = [-1,1] \subseteq \RR$. For this purpose, let $t = \{t_n\}$ be a finite sequence with $t_n \in \mathcal{A}$ for $n \geq 0$ and $t_m = \square$ for $m$ large enough. We encode $t$ in $I$ as the point
$$
    x_t = \frac{2}{3}\sum_{i=0}^\ell \sigma(t_i)3^{-i},
$$
where $\sigma: \mathcal{A} \to \{-1,0,1\}$ is the assignment $\sigma(0) = -1$, $\sigma(1) = 1$ and $\sigma(\square) = 0$. In other words, we see $t$ as a finite path in a binary tree where $t_n = 0$ means turning left, $t_n = 1$ means turning right and $t_n = \square$ indicates no turn.
If we identify the set of finite sequences with the set of natural numbers according to their binary expansion (ending with a tail of blank symbols $\square$), the construction above defines an embedding of $\NN$ in the interval $I$ that we shall denote by $\kappa: \NN \hookrightarrow I$.

This construction is particularly useful to encode finite two-sided binary sequences, that is, sequences of the form $t = \{t_n\}$ with $n \in \ZZ$ and $t_m = \square$ for $|m|$ large enough.  These sequences form a set denoted by $\Lambda$. Indeed, if we split $t$ into the two one-sided sequences $t^+$ and $t^-$ of non-positive and negative terms, respectively, we define the embedding $\kappa: \Lambda \hookrightarrow I^2$, $t \mapsto (\kappa(t^-), \kappa(t^+))$. In the same way, we obtain embeddings of $\NN^n$ in the cube $I^n$ for any $n\geq 1$, also denoted by $\kappa: \NN^n \hookrightarrow I^n$. %
}

Using this notion, we can enlarge our notion of dynamical bordisms to encode embeddings of intervals.

\begin{definition}
    A \emph{marked dynamical germ} is a triple $(M, X, \bm{\iota})$, where $(M, X)$ is a dynamical germ and $\bm{\iota} = \{\iota_1, \ldots, \iota_m\}$ is a collection of embeddings $\iota_j: I^n \hookrightarrow M_j$ for $1 \leq j \leq m$ of the $n$-dimensional interval, where $M = \sqcup_{j=1}^m M_j$ is the decomposition of $M$ into its connected components. 
\end{definition}

\begin{example}
The tuple $(\mathbb{D}, X, \iota)$ is a marked dynamical germ, where $\mathbb{D}$ is the standard $n$-dimensional disk of radius $2$, $\iota: I^n \hookrightarrow\mathbb{D}^n$ the usual inclusion and $X_p = (0, -1)$ for all $p \in \mathbb{D}^n$ is the normal vector field. We shall refer to this marked dynamical germ as the \emph{standard disc}. 
\end{example}

An important class of dynamical bordisms arises by considering topologically trivial bordisms with dynamics governed by a diffeomorphism. Suppose that $(M, X, \iota)$ is a connected marked dynamical germ such that the image of the embedding $\iota: I^n \hookrightarrow M$ lies in a chart of $M$ and agrees with the standard cube $I^n$ in the chart. In that case, a diffeomorphism $\varphi: M \to M$ is said to be \emph{computable} if $\varphi(\iota(I^n)) = \iota(I^n)$ and $\varphi|_{\NN^n}: \NN^n \subseteq I^n \to \NN^n \subseteq I^n$ is a partial recursive function. If $\varphi$ is moreover isotopic to the identity, then such isotopy generates a nowhere vanishing vector field $X_\varphi$ on $M \times [0,1]$ so that the flow of $X_\varphi$ starting at $(x, 0)$ ends at $(\varphi(x), 1)$. Endowed with this vector field, $M \times [0,1]$ becomes a dynamical bordism of marked dynamical germs whose flow represents~$\varphi$.

{
\begin{definition}
    \label{ex:basic-bordism}
     Let $\varphi: M \to M$ be a null-homotopic computable diffeomorphism. The dynamical bordism $(M \times [0,1], X_\varphi)$ is called a \emph{basic bordism}.
\end{definition}
}

{
Using these basic dynamical bordisms, we can construct more complex dynamical bordisms by gluing them. This generates an important class of dynamical bordisms that we refer to as clean. 

\begin{definition}\label{defn:clean}
A dynamical bordism $(W, X)$ is said to be \emph{clean} if there exists a CW-subcomplex $\widetilde{W} \subseteq W$ such that $\widetilde{W}$ is a deformation retract of $W$ and $\widetilde{W}$ can be decomposed as the gluing of finitely many basic dynamical bordisms of marked dynamical germs glued along subcomplexes of $\widetilde{W}$.
\end{definition}
}

\subsection{Partial functions from dynamical bordisms}\label{sec:pf-bordisms}

Let $(W, X)$ be a dynamical bordism between marked dynamical germs $(M, X, \bm{\iota})$ and $(M', X', \bm{\iota}')$. Suppose that $M$ has $m$ connected components and $M'$ has $m'$ components. In this section, we will show that we can naturally associate to the bordism $(W, X)$ a partial function $Z_0(W, X): \NN^{\sqcup m}  \dashrightarrow \NN^{\sqcup m'}$, where $\NN^{\sqcup s} = \NN \times \{1, \ldots, s\}$ is the disjoint union of $s$ copies of $\NN$.

To do so, for simplicity, let us first suppose that both $M$ and $M'$ are non-empty and connected, we only have two embeddings $\iota: I^n \hookrightarrow M$ and $\iota': I^n \hookrightarrow M'$. We define the auxiliary partial map
$$
    \psi_{(W, X)}: \NN^n \dashrightarrow \NN^n
$$
as follows. Pick $c \in \NN^n$ and let $x = \iota(\kappa(c)) \in M$, where $\kappa: \NN^n \hookrightarrow I^n$ is the embedding of Section \ref{sec:cantor-encoding}. Consider the flow of the vector field $X$ on $W$ starting at $x$. If the flow hits $M' \subseteq W$, let $x'$ be such an intersection point. Notice that, since $X$ points outwards at $M'$, there exists at most one such intersection point. Then, we set
\begin{equation}\label{eq:map-cantor}
    \psi_{(W, X)}(c) = \left\{\begin{array}{ll}
        (\iota' \circ \kappa)^{-1}(x') & \textrm{if $x' \in \textrm{Img}(\iota' \circ \kappa)$}, \\
        \bullet & \textrm{otherwise}. 
    \end{array}\right.
\end{equation}

\begin{remark}
We may have $\psi_{(W, X)}(c) = \bullet$ for two reasons: Either the flow starting at $x$ does not hit the outgoing boundary $M'$ or it hits $M'$ but not in the image of the Cantor set.
as represented in Figure \ref{fig:dyn-morph}.
\begin{figure}[h]
    \centering
    \includegraphics[width=0.45\linewidth]{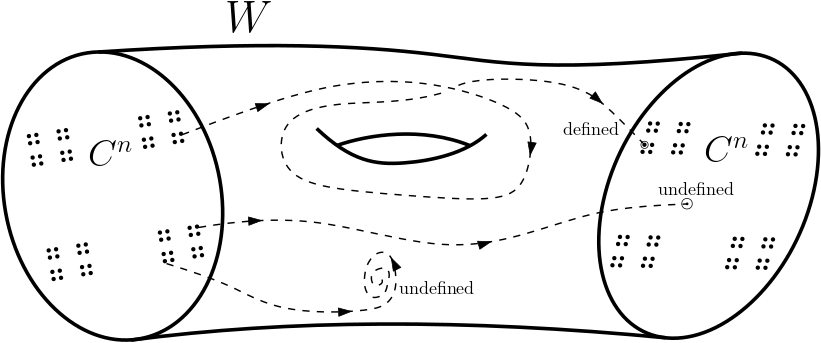}
    \caption{Different possibilities occurring in the definition of $\psi_{(W, X)}: C^n \dashrightarrow C^n$.}
    \label{fig:dyn-morph}
\end{figure}
\end{remark}

With this information at hand, we define the partial function
$$
    Z_0(W, X): \NN  \dashrightarrow \NN
$$
as follows. Let $n \in \NN$ and consider the ``line'' $\{n\} \times \kappa(\NN^{n-1}) \subseteq I^n$. If $\psi_{(W, X)}\left(\{n\} \times \kappa(\NN^{n-1})\right) \subseteq \{n'\} \times \kappa(\NN^{n-1})$ for some $n' \in \NN$, then we set $Z_0(W, X)(n) = n'$, and otherwise $Z_0(W, X)(n) = \bullet$. Explicitly, $Z_0(W, X)(n) = n'$ if and only if, for all $y$, $\psi_{(W, X)}(n, y) = (n', y')$ for some $y' \in \kappa(\NN^{n-1})$.

\begin{remark}
    The clumsy condition that the line $\{n\} \times \kappa(\NN^{n-1})$ must be mapped to a line $\{n'\} \times \kappa(\NN^{n-1})$ should be understood as the fact that the last $n-1$ components of the initial point are just ``auxiliary data'' that do not affect the final result of the calculation. The need for these auxiliary data is very clear geometrically, since the reaching function of a smooth flow is injective but we want to represent also non-injective functions. To avoid this problem, we use the vertical direction to store auxiliary data that allows us to separate points.
    
\end{remark}

The general case of a dynamical bordism $(W, X): (M, X, \bm{\iota}) \to (M', X', \bm{\iota}')$ between non-connected objects can be worked similarly component-wise 
to get a partial map
\begin{equation*}
    \psi_{(W, X)} = \sqcup_j \psi_{(W, X),j}: (\NN^n)^{\sqcup m} \dashrightarrow (\NN^n)^{\sqcup m'},
\end{equation*}
where $m$ is the number of components of $M$ and $m'$ the one of $M'$.
As in the previous case, this induces a partial function $Z_0(W, X): \NN^{\sqcup m}  \dashrightarrow \NN^{\sqcup m'}$. We have thus proven the following result.

\begin{proposition}
    Every dynamical bordism $W$ between marked dynamical germs $M$ and $M'$ with $m$ and $m'$ connected components, respectively, can be naturally associated with a partial function
    $$
        Z_0(W): \NN^{\sqcup m}  \dashrightarrow \NN^{\sqcup m'}.
    $$
    These partial functions satisfy that $Z_0(W \cup_{M'} W)$ is an extension of $Z_0(W) \circ Z_0(W')$ for any dynamical bordisms $W$ and $W'$ with common boundary $M'$.
\end{proposition}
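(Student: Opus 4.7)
The plan is to first verify well-definedness of the partial function $Z_0(W)$ and then establish the extension property via a concatenation-of-flows argument. The existence of $Z_0(W)$ is essentially achieved in the preceding paragraphs; what remains is to check that the auxiliary map $\psi_{(W,X)}$ is unambiguous, i.e., that an integral curve of $X$ starting at a point of $M$ cannot exit $W$ through two distinct boundary points. This is precisely what tameness buys: the curve cannot re-enter $W$ across $M'$ (there $X$ points outward), cannot escape through $\partial' W$ (there $X$ is tangent), and cannot exit through $M$ (there $X$ points inward). Hence such a curve either never reaches $M'$ or reaches it at exactly one point, $\psi_{(W,X)}$ is well-defined, and $Z_0(W)$ is obtained by imposing the ``line restriction'' spelled out above.

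For the extension statement, take dynamical bordisms $W' \colon M_0 \to M'$ and $W \colon M' \to M''$ and form the pushout $W \cup_{M'} W'$. The vector fields $X'$ on $W'$ and $X$ on $W$ are compatible along $M'$: both have the same tangential component $V_{M'}$ coming from the dynamical germ structure at $M'$, and the sign conventions in the definition of a dynamical bordism ($s|_{M'} = s_{M'}$ on $W'$ and $s|_{M'} = -s_{M'}$ on $W$) ensure that the two normal components fit together into a single nowhere-vanishing transverse vector after the orientation identification at the gluing. Using collars of $M'$ in each piece and a standard smoothing, one produces a tame vector field $\tilde{X}$ on $W \cup_{M'} W'$ whose maximal integral curves are precisely the concatenations of an integral curve of $X'$ ending at some $y \in M'$ with the integral curve of $X$ starting at that same $y$.

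Now suppose $(Z_0(W) \circ Z_0(W'))(c)$ is defined and equal to $c'' \in \NN^{\sqcup m''}$. Unpacking the definitions, $Z_0(W')(c) = c'$ is defined, which via the line condition means that for every auxiliary coordinate $y \in \kappa(\NN^{n-1})$ the $X'$-flow from $\iota(\kappa(c,y))$ reaches $M'$ at a point of the form $\iota'(\kappa(c',y'))$; similarly $Z_0(W)(c') = c''$ being defined means that each subsequent $X$-flow from such a point arrives at $M''$ at some $\iota''(\kappa(c'',y''))$. By the concatenation property just established, $\psi_{(W \cup_{M'} W', \tilde{X})}(c,y) = (c'',y'')$ for every $y$, whence $Z_0(W \cup_{M'} W')(c) = c''$, which is the claimed extension. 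The inclusion is in general strict: an intermediate curve may hit $M'$ outside $\iota'(\kappa(\NN^n))$ — making $Z_0(W')(c)$ undefined — yet still land on the Cantor lattice of $M''$ along a proper line after being continued through $W$, and similarly the line condition may fail at $M'$ but succeed at $M''$. The main technical nuisance I expect is the smooth compatibility of $X$ and $X'$ at the gluing locus $M'$, especially near the corners where $\partial M'$ meets the lateral boundaries $\partial' W$ and $\partial' W'$; once this is arranged by a collar-neighborhood smoothing, the rest follows immediately from uniqueness of integral curves.
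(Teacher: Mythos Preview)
Your proposal is correct and follows the same line as the paper, which in fact gives almost no proof at all: the construction of $Z_0(W)$ preceding the proposition is declared to already establish the first claim, and the extension property is later justified in a single clause by ``the uniqueness of the flow of a smooth vector field'' together with a remark that the gluing is made smooth via a collaring. Your write-up is simply a more careful unpacking of these two sentences, including the observation (also made in the paper) that the extension may be strict.
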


At this point, an important question arises: What partial functions are representable by a clean dynamical bordism? In other words, given a partial function $f: \NN \dashrightarrow \NN$, does there exist a clean dynamical bordism $W_f$ such that $Z_0(W_f) = f$?
In this direction, the following theorem establishes the first part of Theorem~\ref{thm:main-theorem-intro}, whose proof is provided in Section \ref{sec:prf-quantizable}.

\begin{theorem}\label{thm:main}
    Partial recursive functions are representable by volume-preserving clean dynamical bordisms of the standard disc.
\end{theorem}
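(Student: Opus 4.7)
The plan is to combine the finite state machine representation of Turing machines from Section \ref{sec:graph-automata} with an explicit geometric realization of each elementary transition. Given a partial recursive function $f$, the Church-Turing correspondence yields a binary Turing machine $M_f$ with associated finite state graph $\mathcal{G}_{M_f}$. I would build the bordism $W_f$ by placing a copy of the standard disc $\mathbb{D}^2$ at each vertex of $\mathcal{G}_{M_f}$, placing a basic dynamical bordism along each edge, and gluing the pieces along their common discs. Computation states $(t,q) \in \Lambda \times Q$ are encoded as points in the disc sitting at vertex $q$, using the embedding $\kappa: \Lambda \hookrightarrow I^2 \hookrightarrow \mathbb{D}^2$ of Section \ref{sec:cantor-encoding}. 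A distinguished \emph{initial disc} is identified with the disc at the initial vertex $q_0$ via an inclusion adapted to $\kappa$, and all halting vertices are funneled into a distinguished \emph{output disc} through a collecting bordism.

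The heart of the construction is to realize each elementary Turing transition by an explicit basic bordism in the sense of Definition \ref{ex:basic-bordism}. For a non-halting vertex $q$, the incoming disc must first be sorted according to the symbol $t_0$ under the head. Because of the Cantor encoding, the three sets $\{t : t_0 = 0\}$, $\{t : t_0 = 1\}$, $\{t : t_0 = \square\}$ lie in disjoint horizontal substrips of $I^2$, so the sorting is performed by routing each substrip through a different branch of a pair-of-pants bordism whose attaching diffeomorphism is volume-preserving and isotopic to the identity. Along each outgoing edge labelled $(s, s', \epsilon')$, the transition itself amounts to an explicit combinatorial operation on the Cantor code: replacing $t_0$ by $s'$ and shifting by $\epsilon'$. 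At the level of $\kappa$, this is a piecewise-affine map of horizontal strips into $I^2$, essentially a Baker-type map, which can be extended to a smooth volume-preserving diffeomorphism $\varphi: \mathbb{D}^2 \to \mathbb{D}^2$ supported in a neighborhood of $I^2$. Because $\mathbb{D}^2$ is simply connected and the volume-preserving mapping class group of the disc is trivial, $\varphi$ is isotopic to the identity through volume-preserving maps, producing the vector field $X_\varphi$ on the basic bordism $\mathbb{D}^2 \times [0,1]$.

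Once the basic bordisms for the sorting, shift/write, and collecting operations are in place, I would assemble them according to the combinatorics of $\mathcal{G}_{M_f}$, taking care that loops in the graph (which encode the "while" structure of the computation) produce non-trivial handles in $W_f$, so that the vector field circulates through them as long as the machine has not halted. The underlying CW-complex $\widetilde{W}_f$ is a finite union of basic bordisms glued along discs, hence clean by Definition \ref{defn:clean}; a standard thickening/smoothing of $\widetilde{W}_f$ yields a smooth $3$-manifold $W_f$ together with a deformation retract onto $\widetilde{W}_f$, and the vector fields on the pieces concatenate to a tame vector field $X_f$ on $W_f$. Volume preservation is maintained throughout because each elementary piece is volume-preserving and gluing happens across discs where the vector fields agree.

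The main obstacle is verifying that the reaching function $Z_0(W_f, X_f)$ coincides exactly with $f$ on all of $\mathbb{N}$. This demands a careful bookkeeping of how the Cantor embedding $\kappa$ transforms under repeated shifts—in particular, ensuring that the portions of the tape that fall outside the image of $\kappa$ after each shift are absorbed into the "auxiliary data" direction of $I^n$ rather than being lost, so that the composition of the edge diffeomorphisms along any trajectory agrees with the iteration of $\Delta_{M_f}$. A related subtlety is the correct treatment of the halting dichotomy: inputs on which $M_f$ halts must produce trajectories that reach the output disc at the point encoding the output, while inputs on which $M_f$ does not halt must produce trajectories trapped in some handle of $W_f$ (yielding $\bullet$ through the ``otherwise'' branch of \eqref{eq:map-cantor}). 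Establishing both statements simultaneously, while keeping the vector field volume-preserving and the bordism clean, is the technical core of the argument and is carried out in detail in Appendix \ref{sec:prf-quantizable}.
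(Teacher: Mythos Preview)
Your overall strategy---thicken the finite state graph $\mathcal{G}_{M_f}$, realize each edge by a basic bordism built from a Baker-type area-preserving diffeomorphism, and assemble the pieces---is essentially the paper's approach. However, there is one genuine gap: you never pass to a \emph{reversible} Turing machine, and without this the gluing step fails.

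The problem is on the incoming side of each vertex. The flow of a smooth vector field is injective, so distinct points arriving at a disc $\mathbb{D}(q')$ along different edges must land at distinct points. If two edges labelled $(\alpha_1,\beta,\epsilon)$ and $(\alpha_2,\beta,\epsilon)$ from states $q_1\neq q_2$ both target $q'$, their tubes must both be glued to the same strip $\mathbb{D}_{-\epsilon,\beta}(q')$, and the two vector fields conflict there. More fundamentally, a non-reversible machine can send two distinct computation states $(t_1,q_1)$ and $(t_2,q_2)$ to the same $(t',q')$; no injective (let alone volume-preserving) flow can realize that. Your pair-of-pants handles the outgoing split by symbol, but you say nothing about merging incoming edges, and a naive ``collecting'' pair-of-pants would require a non-injective time-one map. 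The paper fixes this by first invoking Bennett's theorem to replace $M_f$ by an equivalent \emph{reversible} single-tape machine; reversibility is exactly the statement that at every vertex the incoming tubes attach to pairwise disjoint open strips, so the local vector fields glue coherently. Your ``collecting bordism'' for the halting states has the same issue and is resolved in the paper by a reversible graph modification (adding auxiliary states and trivial shifts) rather than by a many-to-one bordism.

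A secondary point: your sentence ``volume preservation is maintained throughout because each elementary piece is volume-preserving and gluing happens across discs where the vector fields agree'' covers the CW skeleton $\widetilde{W}_f$ but not its smooth thickening $W_f$. On $W_f\setminus\widetilde{W}_f$ the vector field still has to be defined, and it must be divergence-free, tangent to $\partial'W_f$, and transverse to the boundary discs. The paper does this by observing that the skeleton field has zero flux through the boundary of a slight smoothing $\widehat{W}_f$, writing it as a curl via the Poincar\'e lemma, and extending the vector potential; this step is not automatic and should be flagged.
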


{\begin{remark}
    In general, the topology of the bordism representing a partial recursive function $f: \NN \to \NN$ will not be trivial, capturing the complexity of the function. In particular, the function $f$ can be represented by a basic morphism if and only if $f$ extends to a diffeomorphism of the disc, when seen $\NN \hookrightarrow \DD$ via $\kappa$ (recall that $\DD$ is the standard disk of radius $2$). However, not all the partial recursive functions can be extended, such as the function that reverses the binary expansion of a natural number, i.e., $f\left(\sum_{i=0}^n a_i2^i\right) = \sum_{i=0}^n a_{n-i}2^i$. Indeed, the points in $\mathbb D$ associated to the sequences $x_n = 2^n$ and $x'_n = 2^n + 2^{n-1}$ converge to the same limit $(-1, 0) \in \DD$, but $f(x_n) = 1$ and $f(x_n') = 3$ for all $n$.
\end{remark} }

{

Recall that, by definition, the vector field of a dynamical bordism is transverse to the incoming and outgoing boundaries of the bordism. This condition is crucial in Theorem \ref{thm:main}. If we drop it, it is easy to construct a vector field on a topologically trivial bordism representing any function, just by extending the flowlines on the Cantor set by zero.
Indeed, if $A \subseteq M$ is any discrete subset of a compact manifold $M$ (possibly with boundary), given any function $f: A \to A$, we can easily construct a degenerate non-transverse dynamical bordism representing it. To do so, in $M \times [0,1]$, choose pairwise disjoint paths $\gamma_a$ connecting $(a, 0)$ with $(f(a), 1)$ for all $a \in A$. Since $A$ is discrete, these paths $\gamma_a$ can be thickened into pairwise disjoint closed tubes $T_a \subseteq M \times [0,1]$. We endow each of these tubes $T_a$ with a flow-box vector field $X_a$ in such a way that the flow starting at $(a,0)$ hits $(f(a), 1)$ and such that $X_a$ vanishes at the `vertical' boundary of $T_a$. Extending these vector fields $X_a$ by zero outside $T_a$, we get a vector field $X$ on $M \times [0,1]$ representing the function $f$ on $A$. However, this vector field $X$ is clearly non-transverse to the boundaries $M \times \{0,1\}$ since it vanishes on an open set.  

In the same spirit, notice that the condition that the dynamical bordism $(W, X)$ representing a partial recursive function $f$ is clean is also of central relevance here. Intuitively it forces that the flow of $X$ captures topologically the operation of $f$. Indeed, since $(W,X)$ is constructed by gluing basic bordisms induced by a computable diffeomorphism, each of these basic tubes can only represent a simple intermediate calculation performed during the computation of $f$.
}

\begin{remark}
The construction of Theorem \ref{thm:main-theorem-intro} can be improved to allow an encoding that is robust under non-uniform perturbations. To be precise, instead of encoding a finite sequence $t = \{t_i\}$ by a point in the square $I$, it can be encoded by a slightly bigger open set $U_{t} \subseteq I$ given by
$$
    U_t = \left\{\left.\frac{2}{3}\sum_{i=0}^\ell \sigma(t_i)3^{-i} + y \in I \;\right|\; \frac{1+\varepsilon}{3^{\ell +2}}< y < \frac{2-\varepsilon}{3^{\ell +2}}\right\},
$$
for $\varepsilon > 0$ fixed and small. In other words, $U_t$ is the interval of length $(1-2\varepsilon)3^{-\ell - 2}$ centered in $x_t$, where $x_t \in I$ is the point defined in Section \ref{sec:cantor-encoding}. This induces a collection of open sets $\mathcal{U}_{\NN} = \{U_t\}_{t \in \NN}$ of $I$. By considering the collection $\mathcal{U}_{\Lambda} = \{U_{t^+} \times U_{t^-}\}$ for $t \in \Lambda$ a two-sided finite sequence with non-negative part $t^+$ and negative part $t^-$, we can also form a collection of open sets of $I^2$ encoding $\Lambda$. The whole construction of this paper can be carried out verbatim to work with this encoding in terms of open sets.
\end{remark}

{
We finish this section by proving that the converse of Theorem \ref{thm:main} also holds, which completes the proof of Theorem~\ref{thm:main-theorem-intro}. Explicitly, given any clean dynamical bordism $(W, X)$, the flow of $X$ defines a function $Z_0(W, X): (\NN^n)^{\sqcup m} \dashrightarrow (\NN^n)^{\sqcup m'}$ that is computable. 

\begin{theorem}\label{thm:clean-kleen}
    The reaching function of a clean dynamical bordism is a partial recursive function.
\end{theorem}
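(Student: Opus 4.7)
The plan is to extract from the cleanness hypothesis a finite combinatorial shadow of $(W,X)$ and then show that tracing this shadow through the flow is a partial recursive operation. By Definition \ref{defn:clean}, the skeleton $\widetilde{W}$ is a finite gluing of basic bordisms $(W_i, X_i) = (M_i \times [0,1], X_{\varphi_i})$ with null-homotopic computable diffeomorphisms $\varphi_i$. The first step is to encode this decomposition as a finite directed graph $G$: its vertices are the boundary copies of the $M_i$ appearing in the decomposition modulo the gluing identifications, its edges are the basic bordisms $W_i$, and vertices lying on the incoming or outgoing boundary of $W$ inherit the preferred embedding $\iota \circ \kappa: \NN^n \hookrightarrow M_i$ of the cube.

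The second step is the dynamical interpretation. Inside a single basic bordism, the flow of $X_{\varphi_i}$ is, by construction, the graph of an isotopy realising $\varphi_i$, so a point entering at $\iota(\kappa(c))$ exits at $\iota(\kappa(\tilde\varphi_i(c)))$, where $\tilde\varphi_i := \varphi_i|_{\NN^n}$ is partial recursive by definition of computable diffeomorphism. At an interior vertex of $G$ the gluing data determine unambiguously which basic bordism is entered next; at a vertex on the outgoing boundary of $W$ the trajectory exits the bordism. Combining these observations yields a Turing machine $T_{W,X}$ simulating $Z_0(W,X)$: it stores a pointer to the current vertex of $G$ (finite control) together with a binary encoding of the current natural number, and at each step invokes the subroutine for $\tilde\varphi_i$ attached to the outgoing edge and then updates the pointer along the gluing. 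The machine halts successfully when the pointer reaches an outgoing vertex and the tape still encodes a point of $\kappa(\NN^n)$; otherwise it halts with undefined output or loops forever, matching the undefined values of $Z_0(W,X)$. Since the finite control of $G$ composed with partial recursive updates remains partial recursive (via composition and minimization), the claim follows.

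The main obstacle is justifying the dynamical interpretation rigorously. The vector field $X$ is defined on all of $W$, not only on the skeleton, so one must rule out the possibility that a trajectory starting at an encoded point wanders into the complement $W\setminus\widetilde{W}$ and returns at an unpredictable place. My plan is to exploit the fact that the embeddings $\iota\circ\kappa$ factor through $\widetilde{W}$, together with the description of each basic bordism as a mapping cylinder whose flow foliates it by graphs of an isotopy, to conclude that trajectories starting on the encoded points stay on the skeleton and hence are faithfully tracked by walks on $G$. A secondary subtlety is to verify that the conditions intrinsic to the Cantor-like encoding $\kappa$ (e.g.~that a composition $\tilde\varphi_{i_k}\circ\cdots\circ\tilde\varphi_{i_1}(c)$ indeed lands in $\kappa(\NN^n)$ rather than merely in $\iota(I^n)$) are preserved through successive transitions; this is decidable along the simulation because each $\tilde\varphi_i$ is recursive and $G$ is finite, so it does not affect the partial recursive character of the final reaching function.
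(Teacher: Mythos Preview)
Your approach is correct and close in spirit to the paper's, but the packaging differs. The paper argues by induction on the number $b$ of basic bordisms in the skeleton: removing one basic piece $M_\varphi$ produces a smaller clean bordism $\widetilde{W}'$ with one extra input and one extra output boundary, and then $Z_0(\widetilde{W})$ is expressed by a feedback formula in $Z_0(\widetilde{W}')$ and $Z_0(M_\varphi)$ which is visibly partial recursive (via primitive recursion/minimization). Your proposal instead builds the finite transition graph $G$ all at once and exhibits a Turing machine that walks on $G$ while calling the subroutines $\tilde\varphi_i$. These are really two presentations of the same computation: the paper's inductive formula is precisely the recursive unrolling of your graph walk. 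Your version has the advantage of making the simulating Turing machine explicit, while the paper's version stays closer to Kleene's closure schemes and avoids having to discuss the machine model at all.

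One point you should tighten: at an interior vertex of $G$ there may be several outgoing edges (the gluing in Definition~\ref{defn:clean} is along subcomplexes, not necessarily along full boundaries), so ``the gluing data determine unambiguously which basic bordism is entered next'' is true only once you also know the current encoded point. You should say explicitly that the routing decision---which outgoing piece the point enters---is itself a partial recursive function of the current $c\in\NN^n$, because the gluing loci pull back under $\iota\circ\kappa$ to decidable subsets of $\NN^n$. The paper's inductive proof sidesteps this by treating the removed piece as contributing a single new boundary component, which amounts to the same assumption. Your identification of the ``stay on the skeleton'' issue is apt; the paper handles it tacitly by computing $Z_0(\widetilde{W})$ rather than $Z_0(W)$, so your plan to argue via the mapping-cylinder foliation is exactly what is needed to close that gap.
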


\begin{proof}
    We will prove it by induction on the number $b$ of basic bordisms glued to form the clean dynamical bordism. In the base case $b = 1$, the clean bordism is a basic bordism and the reaching function is computable by hypothesis.

    Now, suppose that the result is true for all clean bordisms formed by gluing $b\geq 1$ basic bordisms, and let us prove it for $b + 1$. Let $W$ be a clean bordism whose underlying CW-complex $\widetilde{W}$ is formed by gluing $b + 1$ basic bordisms and has reaching function $Z_0(\widetilde{W}): \NN^{\sqcup m} \to \NN^{\sqcup m'}$. Choose one of the basic bordisms $M_\varphi = (M \times [0,1], X_\varphi)$ in $\widetilde{W}$ and denote by $\widetilde{W}'$ the result of removing $M_\varphi$ from $\widetilde{W}$. We have that $\widetilde{W}'$ is composed of $b$ basic bordisms, so by induction hypothesis, the reaching function $Z_0(\widetilde{W}')$ is computable. 
    
    First, suppose that the input and output boundaries of $M_\varphi$ are not boundaries of the bordism $\widetilde{W}$. Hence, removing $M_\varphi$ has the effect of adding a new input and output boundaries in $\widetilde{W}'$, so that $Z_0(\widetilde{W}'): \NN^{\sqcup (m+1)} \to \NN^{\sqcup (m'+1)}$ and $Z_0(M_\varphi): \NN \to \NN$. For simplicity, let us assume that the newly created input and output boundaries correspond to the $(m+1)$-th copy of $\NN^{\sqcup (m+1)}$ and the $(m'+1)$-th copy of $\NN^{\sqcup (m'+1)}$, respectively. Let us denote by $(n, i)$ the point $n \in \NN$ in the $i$-th component of $\NN^{\sqcup m}$. In that situation, the function $Z_0(\widetilde{W})$ can be described in terms of $Z_0(\widetilde{W}')$ and $Z_0(M_\varphi)$ as follows: $Z_0(\widetilde{W})(n,i) =
            (n', j)$ if  $Z_0(\widetilde{W}')(n,i) = (n',j)$ with $j \leq m'$, and
            $Z_0(\widetilde{W})(n,i) = Z_0(\widetilde{W}')(Z_0(M_\varphi)(n'), m+1)$ if $Z_0(\widetilde{W}')(n,i) = (n',m'+1)$.

    Using the inductive construction of partial recursive functions, it is easy to check that this function can be written as a recursion on $Z_0(\widetilde{W}')$ and thus it is a partial recursive function provided that $Z_0(\widetilde{W}')$ is so. The case in which $M_\varphi$ connects with an output boundary of $\widetilde{W}$ is completely analogous by adjusting the number of input and output components of the boundary of $\widetilde{W}'$, and results in a simpler expression for $Z_0(\widetilde{W})$. This proves the inductive step and therefore the theorem follows.
\end{proof}

\begin{corollary}
    Clean dynamical bordisms is a model of computation equivalent to Turing machines.
\end{corollary}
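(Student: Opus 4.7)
The plan is to assemble this directly from the two directions already established in Theorem~\ref{thm:main} and Theorem~\ref{thm:clean-kleen}. By ``equivalent model of computation'' I mean that the class of partial functions $\NN \dashrightarrow \NN$ arising as reaching functions $Z_0(W,X)$ of clean dynamical bordisms between copies of the standard disc coincides with the class of functions implementable by a Turing machine, i.e., with the class of partial recursive functions. So the entire proof is really the verification of a two-sided set inclusion between these classes.

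For the first inclusion, I would simply quote Theorem~\ref{thm:main}: given any partial recursive function $f: \NN \dashrightarrow \NN$, one produces a volume-preserving clean dynamical bordism $(W_f, X_f)$ of the standard disc with $Z_0(W_f, X_f) = f$; hence every Turing-implementable function is realised as the reaching function of some clean dynamical bordism. For the reverse inclusion, I would invoke Theorem~\ref{thm:clean-kleen}: for any clean dynamical bordism $(W, X)$, the reaching function $Z_0(W, X)$ is partial recursive, so every function representable by a clean dynamical bordism is already Turing-implementable. Putting these two inclusions together identifies the two classes of functions, which is by definition the content of the corollary.

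Since this is purely an assembly of previously established results, there is no substantive obstacle. The only aspect I would double-check for safety is that the input/output encoding used to define reaching functions, namely the Cantor-type embedding $\kappa: \NN \hookrightarrow I$ of Section~\ref{sec:cantor-encoding}, and the binary tape encoding used to define $f_M$ for a Turing machine $M$ in Section~\ref{sec:turing-machines}, identify $\NN$ in a computably compatible way, so that the correspondence $Z_0(W,X) \leftrightarrow f_M$ really takes place between the same sets of partial functions on $\NN$. Both encodings go through the binary expansion of natural numbers, so no further argument is needed and the equivalence of the two computational models follows at once.
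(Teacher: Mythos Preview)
Your proposal is correct and matches the paper's approach exactly: the corollary is stated immediately after Theorem~\ref{thm:clean-kleen} with the remark that it follows from that theorem together with Theorem~\ref{thm:main}, and no further proof is given. Your additional sanity check about the compatibility of the Cantor embedding $\kappa$ with the binary tape encoding is a reasonable observation but is not needed beyond what the paper already assumes.
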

}

\section{Proof of the main theorem}\label{sec:prf-quantizable}

In this section, we shall prove Theorem \ref{thm:main} by showing that any finite state machine, as described in Section \ref{sec:graph-automata}, can be represented through a dynamical bordism. For simplicity, we shall focus on the case $n = 2$ of surfaces, but the constructions can be straightforwardly extended to the higher dimensional case. 

\subsection{Reversible Turing machines}\label{sec:reversible-Turing}

A very desirable feature of a Turing machine is the ability to reverse their computation. To be precise, consider a Turing machine $M$ that induces a dynamical system
$$
    \Delta_M: \mathcal{S} \to \mathcal{S}
$$
on the space $\mathcal{S}$ of computational states. We say that $M$ is \emph{reversible} if the function $\Delta_M$ is injective. Notice that, in particular, this implies that if $\delta: Q \times \mathcal{A} \to Q \times \mathcal{A} \times \{\pm 1\}$ is the transition function of $M$, then its first two components $\delta': Q \times \mathcal{A} \to Q \times \mathcal{A}$ form an injective function. Reversible computation is an active area of research that has been deeply studied in the literature, see for instance \cite{barbieri2016group,morita2017theory}.

In the seminal paper \cite{bennett1973logical}, Bennett showed that any computation can be done by means of a reversible Turing machine, as stated in the following result.

\begin{theorem}{\cite{bennett1973logical}}
For every Turing machine, there exists an equivalent reversible Turing machine.
\end{theorem}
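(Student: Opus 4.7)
The plan is to follow Bennett's classical three-phase construction. Starting from a Turing machine $M = (Q, q_0, Q_{\textrm{halt}}, \delta)$ computing the partial function $f_M$, I would build an equivalent reversible machine $M'$ whose computation is organized into three explicit stages, using (for clarity of exposition) three tapes---a work tape, a history tape, and an output tape---since multi-tape reversible machines can be reduced to single-tape reversible machines at standard cost.

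In Phase 1 (compute with history), $M'$ simulates $M$ step by step on the work tape, but before executing each transition $\delta(q,a)=(q',a',\epsilon)$ it appends to the history tape a record of the triple $(a,\epsilon,q)$ consisting of the symbol just overwritten, the head direction, and the prior control state, advancing the history head forward only. In Phase 2 (copy output), once $M$ would halt, $M'$ copies the current work-tape contents cell by cell onto a fresh output tape; writing a symbol $a$ onto a blank output cell is inverted by erasing the output cell whenever it matches the corresponding work-tape cell, so each copy step is injective. In Phase 3 (uncompute), $M'$ runs Phase 1 backwards: it reads the most recent history record to restore the previous state, symbol, and head position, and erases that record, continuing until the history tape is empty. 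The final configuration has the original input on the work tape, blank on the history tape, and $f_M(x)$ on the output tape, so after projecting on the output tape one recovers $f_{M'}=f_M$.

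The global transition function $\Delta_{M'}$ is then injective because each phase is individually injective---the current configuration together with the top history record uniquely determine the previous configuration in Phases 1 and 3, and the copy step in Phase 2 is trivially invertible---and because the phases are separated by disjoint sets of control states, so any post-transition configuration reveals unambiguously which phase produced it. The hard part will be the bookkeeping at the boundaries between phases: one must ensure that the history tape is genuinely blank at the start of Phase 1 and again at the end of Phase 3 (handled by an explicit end-of-history marker) and that the copy phase detects completion unambiguously (handled by scanning to delimiters placed at the extreme non-blank positions of the work-tape contents). Both of these must be designed with care so that the injectivity of $\Delta_{M'}$ is preserved across the phase transitions themselves, and not merely within each phase in isolation.
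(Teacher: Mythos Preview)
Your proposal is correct and follows precisely Bennett's classical three-phase construction (compute-with-history, copy, uncompute). The paper itself does not prove this theorem: it simply cites \cite{bennett1973logical} and records, in the paragraph following the statement, the features of Bennett's construction that it needs later---namely that the reversible machine uses three tapes and that, upon halting, one tape carries the output, another carries the original input, and the third is blank. Your sketch is therefore strictly more detailed than what appears in the paper, and the properties you derive (three tapes, final configuration input/blank/output) match exactly the description the paper relies on.
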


To perform the reversible calculation, the new Turing machine uses three tapes, meaning that the Turing machine has access to three separate tapes with three read-write heads on each of them so that at every step only one of the heads performs an operation (see \cite{sipser1996introduction} for more information about multi-tape machines). Additionally, the construction developed by Bennett has the property that, when the computation has finished, the output of the reversible Turing machine is just the output of the calculation in one tape, whereas another tape contains the initial input tape and the third tape is completely blank. In particular, no auxiliary information about the history of transitions of the Turing machine remains in the tape when the calculation concludes.

It is worth mentioning that the use of several tapes for the calculation is not a limiting factor, and the same calculation can be done with a single-tape machine. This fact is well known, but we include a proof here for the sake of completeness.

\begin{proposition}\label{prop:reversible-tm}
For every multi-tape reversible Turing machine, there exists a single-tape reversible Turing machine computing the same function.
\end{proposition}

\begin{proof}

First, observe that more symbols can be added to the tape alphabet $\mathcal{A}$ of a reversible Turing machine without affecting the reversibility of the computation. To do so, we encode the symbols of $\mathcal{A}$ in binary in the tape by using $b$ bits, and substitute each transition by a chain of transitions reading successively the $b$ symbols of the tape, and replacing them back before shifting. Observe that the amount $b$ of spots needed to encode these symbols is fixed beforehand since $\mathcal{A}$ is finite. The computation is still reversible since we are only adding states with no extra loops.

Additionally, without affecting the reversibility, it is possible to perform the operation that, given a tape, shifts all the subtape to the right of the reading head one position to the right, leaving the subtape to the left unchanged and adding a special symbol in the newly created space. To do so, successively replace every symbol in the right substring by the one immediately to its left, recording the deleted symbol in an auxiliary state of the machine. Thanks to the special symbol marking the starting point of the shift, the computation is clearly reversible. Iterating this machine, we can reversibly shift the right substring $N > 0$ positions to the right and, analogously, we can perform the same operation to the left.

Using these operations, we can simulate $n$ tapes with a reversible single-tape Turing machine as follows. We encode the $n$ tapes in a single tape side-by-side, juxtaposing the symbols and separating them with a special symbol working as delimiter. The current position of the read-write head on each of the tapes is also marked by special symbols. The amount of space devoted to each tape is controlled by a fixed natural number $N > 0$ (typically, very large), so that each subtape is allocated in a memory of $N$ bits. Each time one of the substrings runs out of space and needs more than $N$ bits, a new block of $N$ bits is allocated to its left by applying the shifting machine described above. The process is illustrated in Figure \ref{fig:shift-memory}.

\begin{figure}[h]
    \centering
    \includegraphics[width=0.5\linewidth]{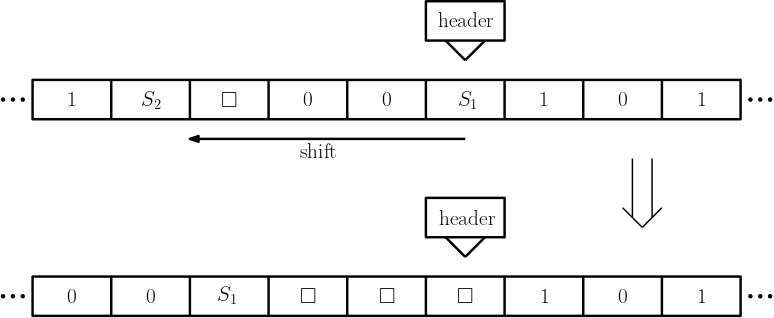}
    \caption{Addition of new memory for the space allocated for a tape. In the picture, every tape has reserved blocks of size $N = 3$, $S_1$ and $S_2$ denote the separation symbols between tapes, and $\square$ is the blank symbol.}
    \label{fig:shift-memory}
\end{figure}

In this manner, the single-tape Turing machine operates on each tape as usual, and every time we want to change the computation to a different tape, we shift (left or right, depending on the position of the target tape) until we find the special symbol pointing the position of the head in that tape. Observe that the amount of auxiliary symbols needed to perform this simulation only depends on the number of tapes $n$, and thus can be fixed beforehand.
\end{proof}

\subsection{The basic bordism}\label{sec:representation-basic}
Before proceeding with the proof of the main result, let us show that we can represent the basic read-write-shift operation as a bordism using elementary operations. For simplicity, throughout this section we will consider the case in which we have only two symbols in our alphabet, $0$ and $1$, but it is straightforward to adapt it to work with more symbols. 

Recall that we denote by $\Lambda$ the set of finite two-sided sequences. For $\epsilon = \pm 1$, we consider the shifting automorphism of $\Lambda$,
$$
    s_\epsilon: \Lambda \to \Lambda,
$$
given by $[s_\epsilon(t)]_n = t_{n+\epsilon}$ for all $n \in \ZZ$. In other words, for $\epsilon = 1$, $s_1$ shifts the sequence to the left (or the origin to the right); whereas for $\epsilon = -1$, $s_{-1}$ shifts the sequence to the right (or the origin to the left). It is well known that $s_\epsilon$ can be extended to an area-preserving diffeomorphism of the disk $\mathbb{D}$, which is equivalent to the celebrated horseshoe map (recall that $\DD$ denotes the standard disk of radius $2$). We include a proof for the sake of completeness. 

\begin{lemma}\label{lem:basic-diffeo}
    For any $\epsilon \in \{\pm 1\}$, there exists a computable area-preserving smooth diffeomorphism
    $$
        \varphi_{\epsilon}: \mathbb{D} \to \mathbb{D}
    $$
    that is the identity on the boundary of $\mathbb{D}$ and such that $\varphi_{\epsilon}|_{\Lambda} = s_{\epsilon}$.
\end{lemma}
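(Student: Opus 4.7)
My plan is to construct $\varphi_\epsilon$ as a smooth, area-preserving, three-symbol analogue of Smale's horseshoe map, with all non-trivial dynamics confined to the interior of $\DD$. As a first step, I would compute explicitly how $s_\epsilon$ acts through the Cantor encoding $\kappa$. For $\epsilon = +1$, writing a two-sided tape as $t^+ = (t_0, t_1, \ldots)$ and $t^- = (t_{-1}, t_{-2}, \ldots)$, and setting $(a, b) = (\kappa(t^-), \kappa(t^+)) \in I^2$, a direct substitution in the definition of $\kappa$ yields the piecewise affine formula
\begin{equation*}
    s_1 : (a, b) \longmapsto \left(\tfrac{a}{3} + \tfrac{2\sigma}{3},\; 3b - 2\sigma\right), \qquad \sigma = \begin{cases} -1 & \text{if } b \in [-1, -\tfrac{1}{3}], \\ 0 & \text{if } b \in [-\tfrac{1}{3}, \tfrac{1}{3}], \\ 1 & \text{if } b \in [\tfrac{1}{3}, 1], \end{cases}
\end{equation*}
which is area-preserving (Jacobian $1$) on each piece and sends the three horizontal strips of $I^2$ onto the three vertical strips of $I^2$, compressing $a$ and stretching $b$ each by a factor of $3$.

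The formula above is discontinuous along $b = \pm 1/3$, so the next step is to upgrade it to an honest smooth diffeomorphism of $\DD$ that agrees with it on $\kappa(\Lambda)$. The key observation is that $\kappa(\Lambda)$ lies strictly inside each open strip $\{b < -1/3\}$, $\{-1/3 < b < 1/3\}$, $\{b > 1/3\}$, so on full open neighborhoods of the Cantor strips the affine formula may be kept verbatim, while the thin transition regions near $b = \pm 1/3$ together with the annular region $\DD \setminus I^2$ provide room for a smooth horseshoe-style folding that routes the missing transitions around through $\DD \setminus I^2$. Concretely, I would build $\varphi_1$ as a finite composition of time-$1$ flows of compactly supported Hamiltonian vector fields on $(\DD, da \wedge db)$, arranged so that the composition (i) preserves $I^2$ setwise, (ii) agrees with the affine formula on a small open neighborhood of each Cantor strip, and (iii) vanishes in a collar of $\partial \DD$. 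Area preservation is then automatic from the Hamiltonian formalism, compact support gives the identity on $\partial \DD$, and exact agreement on neighborhoods of $\kappa(\Lambda)$ yields $\varphi_1|_\Lambda = s_1$.

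The main technical obstacle is the actual production of this composition: the three affine pieces have to be glued smoothly across $b = \pm 1/3$ without destroying area preservation anywhere. My strategy would be to decompose the target map into simple building blocks (a vertical stretch, a horizontal compression, and a folding isotopy that rearranges the three stretched pieces into their target positions), realize each block as the Hamiltonian flow of an explicit bump-cut-off quadratic function, and then absorb any residual area-deficit introduced by the smoothing in the transition regions by a final Moser-trick isotopy supported in $\DD \setminus \kappa(\Lambda)$. By construction, $\varphi_1$ then preserves $I^2$ and its restriction to $\NN^2 \hookrightarrow I^2$ is the shift, which is manifestly a partial recursive function; hence $\varphi_1$ is computable in the sense of Section~\ref{sec:pf-bordisms}.

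Finally, for the case $\epsilon = -1$, I would note that $s_{-1} = s_1^{-1}$ on $\Lambda$, so setting $\varphi_{-1} := \varphi_1^{-1}$ automatically gives a smooth, area-preserving, computable diffeomorphism of $\DD$ that is the identity on $\partial \DD$ and restricts to $s_{-1}$ on $\Lambda$.
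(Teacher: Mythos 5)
Your overall strategy---conjugate the shift through $\kappa$ into a block-affine, baker-type map on $I^2$ and then extend it to an area-preserving diffeomorphism of $\mathbb{D}$ that is the identity near $\partial\mathbb{D}$ (your Hamiltonian flows plus a final Moser correction playing the role of the extension the paper imports from \cite[Proposition 5.1]{CMPP}), with $\varphi_{-1}=\varphi_1^{-1}$---is the same as the paper's. The problem is your ``key observation''. It is true that no point of $\kappa(\Lambda)$ lies exactly on the lines $b=\pm1/3$, but with the encoding of Section \ref{sec:cantor-encoding} the three symbol-cylinders $\{t_0=0\}$, $\{t_0=\square\}$, $\{t_0=1\}$ are sent into the \emph{adjacent} intervals $[-1,-\tfrac13]$, $[-\tfrac13,\tfrac13]$, $[\tfrac13,1]$, which touch instead of being separated by gaps, and $\kappa(\Lambda)$ accumulates on $b=\pm\tfrac13$ from both sides. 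Concretely, take the tapes $t^{(k)}$ with positive part $(0,1,\dots,1)$ ($k$ ones) and $u^{(k)}$ with positive part $(\square,0,\dots,0)$ ($k$ zeros), all other cells blank: then $\kappa(t^{(k)})\to(0,-\tfrac13)$ and $\kappa(u^{(k)})\to(0,-\tfrac13)$, while $\kappa(s_1t^{(k)})\to(-\tfrac23,1)$ and $\kappa(s_1u^{(k)})\to(0,-1)$. So there are no ``thin transition regions near $b=\pm1/3$'' free of points of $\kappa(\Lambda)$, and in fact \emph{no continuous map on $\mathbb{D}$ whatsoever} can agree with your three-branch affine formula on all of $\Lambda$; no choice of Hamiltonian building blocks or Moser correction can repair this, since the map you are trying to extend is not uniformly continuous on $\kappa(\Lambda)$.

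The paper's proof sidesteps exactly this obstruction: as announced at the start of Appendix \ref{sec:prf-quantizable}, it works with the two-symbol alphabet $\{0,1\}$ only, for which the encoded cylinders are genuinely separated Cantor blocks (the middle third is a gap), divides the square into the four blocks determined by the two digits adjacent to the origin, maps them affinely onto disjoint target blocks, and then invokes the horseshoe-type area-preserving extension of \cite[Proposition 5.1]{CMPP}; the case $\epsilon=-1$ is handled, as you do, by taking the inverse. To salvage your argument you should either do the same (restrict to the blank-free subshift, where your neighborhood claim is correct and your Hamiltonian-flow-plus-Moser construction is a perfectly good substitute for the cited extension), or first replace $\kappa$ by an encoding in which the three cylinders are pairwise separated by gaps of definite width, and only then run your smoothing argument. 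A minor further point: your requirement (i) that $\varphi_1$ preserve $I^2$ setwise cannot hold for a smooth folding---the smooth horseshoe necessarily pushes part of the square into $\mathbb{D}\setminus I^2$---but this is cosmetic compared with the issue above.
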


\begin{proof}
Let $I_\varepsilon = [-\varepsilon, 1 + \varepsilon]^2$  for $\varepsilon > 0$ small. Divide the square $I_\varepsilon$ into four regions, $X$, $Y$, $Z$ and $T$, as shown in Figure \ref{fig:diffeo-psi}. The points of $\Lambda$ inside each region correspond to those two-sided sequences of the form
$$
    \begin{array}{cc}
        X \cap \Lambda = \left\{ \ldots \underset{0}{0}0 \ldots\right\}, & Y \cap \Lambda = \left\{ \ldots \underset{0}{0}1 \ldots\right\}, \\
        Z \cap \Lambda = \left\{ \ldots \underset{0}{1}0 \ldots\right\}, & T \cap \Lambda = \left\{ \ldots \underset{0}{1}1 \ldots\right\}. \\
    \end{array}
$$
Here, the subscript ``$0$'' denotes the position of the $0$-th digit in the two-sided sequence. Now, through a linear transformation, we map these regions into the new regions $X'$, $Y'$, $Z'$ and $T'$, as displayed in Figure \ref{fig:diffeo-psi}. Notice that the points of the Cantor set in these regions correspond to sequences of the form
$$
    \begin{array}{cc}
        X' \cap \Lambda = \left\{ \ldots 0\underset{0}{0} \ldots\right\}, & Y' \cap \Lambda = \left\{ \ldots 0\underset{0}{1} \ldots\right\}, \\
        Z' \cap \Lambda = \left\{ \ldots 1\underset{0}{0} \ldots\right\}, & T' \cap \Lambda = \left\{ \ldots 1\underset{0}{1} \ldots\right\}. \\
    \end{array}
$$
Since the new regions are pairwise disjoint, it is easy to extend the aforementioned linear transformation to an area-preserving diffeomorphism of the whole disk $\varphi_{+}: \mathbb{D} \to \mathbb{D}$ so that it is the identity on the boundary, and $\varphi_+|_{\Lambda} = s_+$ (see~\cite[Proposition 5.1]{CMPP} for details). Taking $\varphi_- = \varphi_+^{-1}$, we get and area-preserving diffeomorphism with $\varphi_-|_{\Lambda} = s_-$.

\begin{figure}[h]
    \centering
    \includegraphics[width=0.6\linewidth]{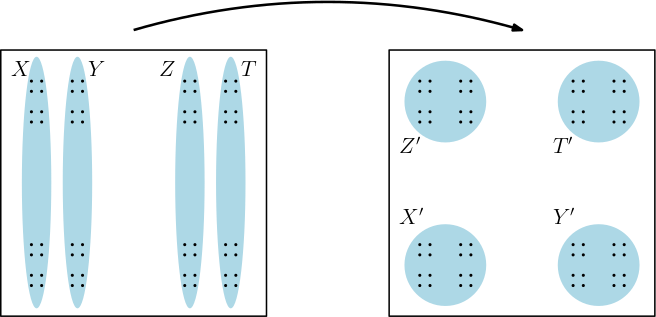}
    \caption{Operation of the diffeomorphism $\varphi_+: \mathbb{D} \to \mathbb{D}$.}
    \label{fig:diffeo-psi}
\end{figure}
\end{proof}

Now, observe that any diffeomorphism of the disk that is the identity on the boundary is isotopic to the identity with a boundary-fixing isotopy. Hence, using the construction in Definition \ref{ex:basic-bordism}, we get the following result. In the statement, by trivial we mean that the vector field $X$ is of the form $\partial_z$ in a neighborhood of  $\partial(\mathbb D\times [0,1])$, with $z\in[0,1]$.

\begin{corollary}\label{cor:basic-bordism}
    For any $\epsilon \in \{\pm 1\}$, there exists a basic bordism
    $$
        W_{\epsilon} = (\mathbb{D} \times [0,1], X_{\epsilon}): \mathbb{D} \to \mathbb{D}
    $$
    such that $X$ is volume-preserving, trivial in a neighborhood of $\partial(\mathbb{D} \times [0,1])$, and the map $\psi_{W_{\epsilon}}: \Lambda \to \Lambda$ of (\ref{eq:map-cantor}) associated to it is $\psi_{W_{\epsilon}} = s_{\epsilon}$.
\end{corollary}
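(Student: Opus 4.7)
The plan is to lift the area-preserving diffeomorphism $\varphi_\epsilon: \mathbb{D} \to \mathbb{D}$ from Lemma \ref{lem:basic-diffeo} to a divergence-free vector field on the product cylinder $\mathbb{D} \times [0,1]$, arranged to be trivial in a neighborhood of the boundary.

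First, I would construct an area-preserving isotopy $\{\Phi_t\}_{t \in [0,1]}$ from $\Phi_0 = \id$ to $\Phi_1 = \varphi_\epsilon$, with $\Phi_t = \id$ on $\partial \mathbb{D}$ for every $t$. The existence of such an isotopy is a standard fact on simply connected surfaces: since $H^1(\mathbb{D}; \mathbb{R}) = 0$, the identity component of the boundary-fixing area-preserving diffeomorphism group of the disc coincides with its Hamiltonian subgroup, so $\varphi_\epsilon$ (already isotopic to $\id$ as a diffeomorphism) can be reached through area-preserving diffeomorphisms fixing the boundary.

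Next, I would reparametrize the isotopy to be constant near the endpoints. Pick a smooth function $\tau: [0,1] \to [0,1]$ with $\tau(0) = 0$, $\tau(1) = 1$, and $\tau' \equiv 0$ on neighborhoods of $0$ and $1$, and set $\tilde{\Phi}_t = \Phi_{\tau(t)}$. Extract the generating time-dependent vector field $V_t$ on $\mathbb{D}$ by $V_t = (\partial_t \tilde{\Phi}_t) \circ \tilde{\Phi}_t^{-1}$. Then $V_t$ is divergence-free (because $\tilde{\Phi}_t$ preserves area), vanishes on $\partial \mathbb{D}$ (because $\tilde{\Phi}_t$ fixes the boundary pointwise), and satisfies $V_t \equiv 0$ for $t$ in neighborhoods of $0$ and $1$ (by the reparametrization).

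Finally, I would define $X_\epsilon(x, z) = V_z(x) + \partial_z$ on $\mathbb{D} \times [0,1]$. This vector field is divergence-free with respect to the product volume form, hence volume-preserving. It reduces to $\partial_z$ in a neighborhood of $\partial(\mathbb{D} \times [0,1])$: near the two horizontal faces $\mathbb{D} \times \{0,1\}$ by the endpoint-triviality of $V_t$, and near the lateral face $\partial \mathbb{D} \times [0,1]$ by the boundary-vanishing of $V_t$. Its flow starting at $(x, 0)$ arrives at $(\tilde{\Phi}_1(x), 1) = (\varphi_\epsilon(x), 1)$, so the associated reaching map satisfies $\psi_{W_\epsilon} = \varphi_\epsilon|_\Lambda = s_\epsilon$ by Lemma \ref{lem:basic-diffeo}. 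The main subtle point is the first step, which hinges on the fact that area-preserving isotopies of the disc are Hamiltonian; once that is granted, the remaining steps are routine cylindrical constructions.
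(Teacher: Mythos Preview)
Your approach is essentially the paper's: the paper just observes that any boundary-fixing diffeomorphism of the disc is isotopic to the identity and invokes the basic bordism construction of Definition~\ref{ex:basic-bordism}, while you supply the details (area-preserving isotopy via Hamiltonians, endpoint reparametrization, suspension to $V_z+\partial_z$) that the paper leaves implicit.

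One small slip: vanishing of $V_t$ on $\partial\mathbb{D}$ gives $X_\epsilon=\partial_z$ only \emph{on} the lateral face, not in a neighborhood of it. To get triviality in a full neighborhood of $\partial\mathbb{D}\times[0,1]$ you should arrange $\varphi_\epsilon$, and hence the Hamiltonian isotopy, to be the identity on a collar of $\partial\mathbb{D}$; this is harmless since the Cantor encoding sits in the square $I^2$ strictly interior to the radius-$2$ disc $\mathbb{D}$, so the compactly supported Hamiltonian can be chosen to vanish near the boundary.
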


However, a computation step of a Turing machine is not only a shift, but also a read-write operation. We can represent this operation by gluing $W_\epsilon$ to suitable parts of disks. To be precise, suppose that we want to execute a transition $(\alpha, \beta, \epsilon)$, i.e., when the symbol $\alpha \in \mathcal{A}$ of the alphabet is read, the header substitutes it by $\beta \in \mathcal{A}$ and then shifts the tape in the $\epsilon = \pm 1$ direction.

For any symbol $s \in \mathcal{A}$, let us consider disjoint diffeomorphic open sets $\mathbb{D}_{0,s}$ of the disk $\mathbb{D}$ containing the vertical strip of all the points of $\Lambda$ of sequences $t$ with $t_0 = s$, as in Figure \ref{fig:open-sets}. Using the shifting diffeomorphism $\varphi_+: \mathbb{D} \to \mathbb{D}$ of Lemma \ref{lem:basic-diffeo}, we also set $\mathbb{D}_{n,s} = \varphi_-^n(\mathbb{D}_{0,s})$ for any $n \in \mathbb{Z}$. In this manner, the open set $\mathbb{D}_{n,s}$ contains the sequences $t \in \Lambda$ with $t_n = s$, see Figure \ref{fig:open-sets}.

\begin{figure}[h]
    \centering
    \includegraphics[width=0.7\linewidth]{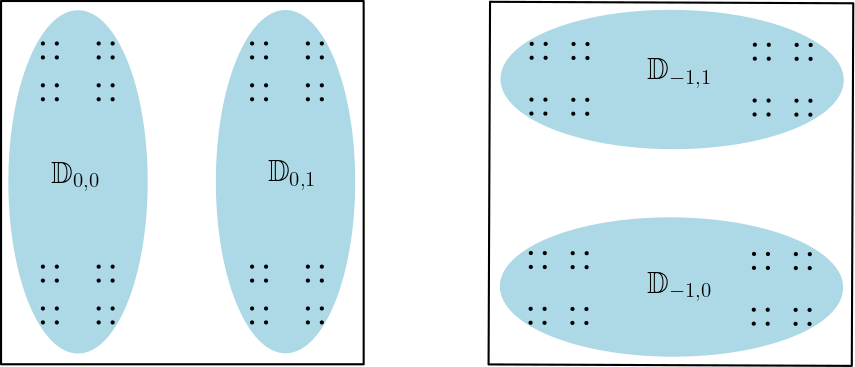}
    \caption{Open sets $\mathbb{D}_{n,s}$.}
    \label{fig:open-sets}
\end{figure}

Now, we are going to paste $W_\epsilon$ to an incoming disk $\mathbb{D}$ and to an outgoing disk $\mathbb{D}'$ to represent the read-write-shift operation $(\alpha, \beta, \epsilon)$. For this purpose, let us denote the incoming boundary of $W_\epsilon$ by $\mathbb{D}(W_\epsilon)$ and the outgoing boundary by $\mathbb{D}'(W_\epsilon)$. The partition defined above determines open sets $\mathbb{D}(W_\epsilon)_{0, \beta}$ and $\mathbb{D}'(W_\epsilon)_{-\epsilon, \beta}$ of $\mathbb{D}(W_\epsilon)$ and $\mathbb{D}'(W_\epsilon)$, respectively. Then, we glue a small cylinder $\mathbb{D}(W_\epsilon)_{0, \beta} \times [0,1]$ to $\mathbb{D}(W_\epsilon)_{0, \beta}$ and analogously a cylinder $\mathbb{D}'(W_\epsilon)_{-\epsilon, \beta} \times [0,1]$ to $\mathbb{D}'(W_\epsilon)_{-\epsilon, \beta}$. The resulting space will be denoted $\widehat{W}_\epsilon$ and is depicted in Figure \ref{fig:gluing-short}.

\begin{figure}[h]
    \centering
    \includegraphics[width=0.7\linewidth]{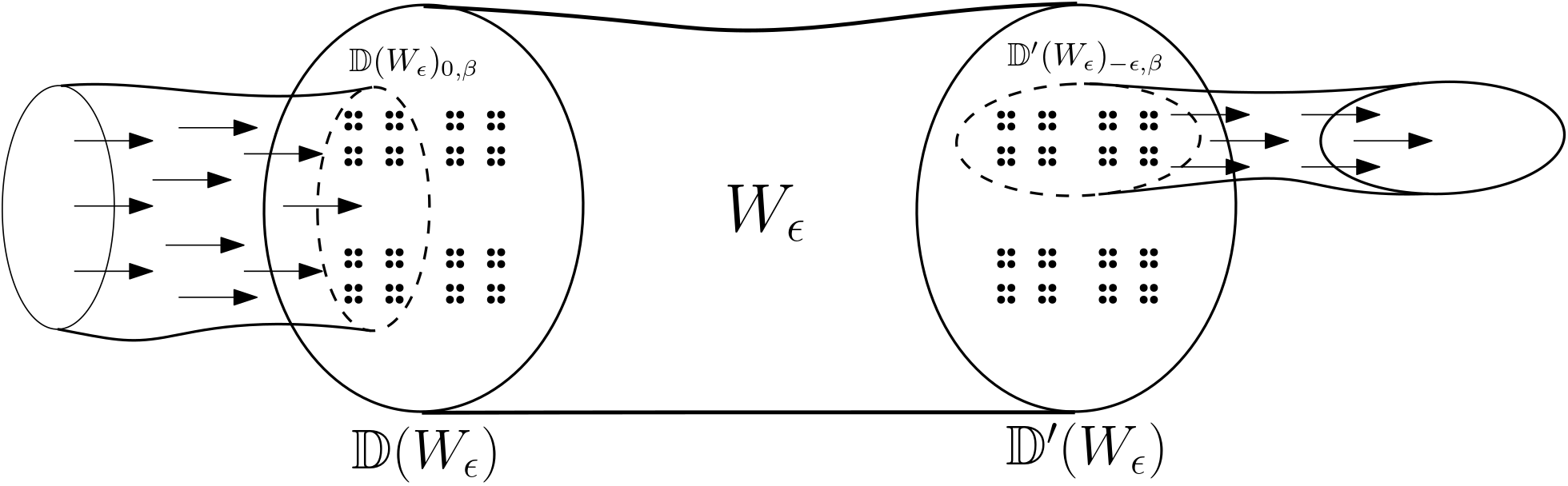}
    \caption{Gluing of the bordism $W_\epsilon$ with two small cylinders.}
    \label{fig:gluing-short}
\end{figure}
    
Furthermore, since the vector field $X_\epsilon$ on $W_\epsilon$ is trivial in a neighbourhood of the boundary, it can be extended to $\widehat{W}_\epsilon$ by taking the vector field $\partial_z$ on the glued cylinders.

Now, the resulting space $\widehat{W}_\epsilon$ can be glued to the incoming disk $\mathbb{D}$ and to an outgoing disk $\mathbb{D}'$ to represent the read-write-shift operation $(\alpha, \beta, \epsilon)$. This is done by gluing the open set $\mathbb{D}_{0, \alpha}$ of $\mathbb{D}$ to the incoming boundary of $\widehat{W}_\epsilon$, which is $\mathbb{D}(W_\epsilon)_{0, \beta}$ and coincides with $\mathbb{D}_{0, \alpha}$ up to traslation. Analogously, we glue the open set $\mathbb{D}'(W_\epsilon)_{-\epsilon, \beta}$ of the outgoing boundary of $\widetilde{W}_\epsilon$ to the open set $\mathbb{D}_{-\epsilon, \beta}'$. Since the vector field is trivial in a boundary of the gluing loci, it naturally extends to a smooth vector field on the resulting space. The resulting bordism is shown in Figure \ref{fig:gluing}.

\begin{figure}[h]
    \centering
    \includegraphics[width=0.7\linewidth]{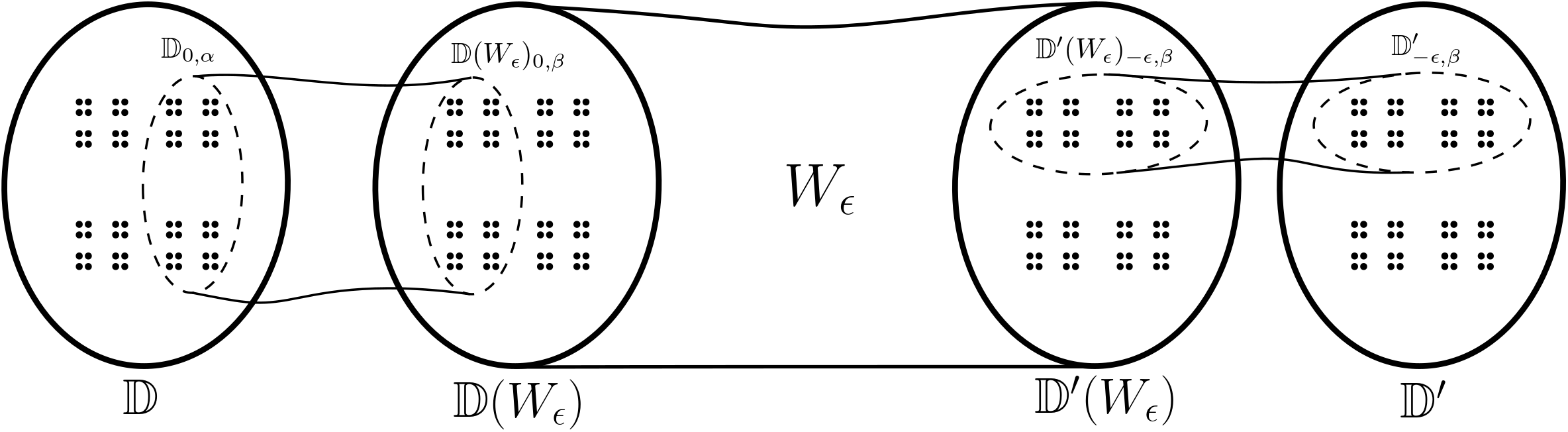}
    \caption{Gluing of the bordism $W_\epsilon$ with the disks $\mathbb{D}$ and $\mathbb{D}'$.}
    \label{fig:gluing}
\end{figure}
    
The flow of the resulting CW-complex performs exactly the read-write-shift operation $(\alpha, \beta, \epsilon)$. Indeed, since only the open set $\mathbb{D}_{0,\alpha} \subseteq \mathbb{D}$ is glued, the operation only affects to those tapes $t$ with $t_0 = \alpha$. This open set $\mathbb{D}_{0,\alpha}$ is glued to $\mathbb{D}(W_{\epsilon})_{0,\beta}$, having the effect of substituting the symbol $\alpha$ at position $0$ by the symbol $\beta$. In other words, under this gluing, a tape $t \in \mathbb{D}_{0,\alpha} \cap \Lambda$, i.e., with $t_0 = \alpha$, corresponds to another tape $t' \in \mathbb{D}(W_\epsilon)_{0,\beta} \cap \Lambda$ with $t'_n = t_n$ for $n \neq 0$ and $t'_0 = \beta$. Now, given such a tape $t' \in \mathbb{D}(W_\epsilon)_{0,\beta} \cap \Lambda$, the effect of the flow of $W_{\epsilon}$ is to send it to the tape $s_\epsilon(t') \in \mathbb{D}'(W_\epsilon) \cap \Lambda$. Concretely, since $t'_0 = \beta$, then $s_\epsilon(t')_{-\epsilon} = \beta$ and thus $s_\epsilon(t') \in \mathbb{D}'(W_\epsilon)_{-\epsilon, \beta}$. This is exactly the open set that is glued identically with the final disk $\mathbb{D}'$. Furthermore, the flow obtained in the resulting CW-complex is volume preserving, because the flows of each one of the basic bordisms that are glued are volume preserving as well.

\begin{proposition}\label{P:basicbord}
    For any $\epsilon \in \{\pm 1\}$, symbols $\alpha, \beta \in \mathcal{A}$, and disks $\mathbb{D}$ and $\mathbb{D}'$, there exists a dynamical bordism endowed with a volume-preserving vector field that is trivial in a neighbourhood of the boundary such that the flow performs the read-write-shift operation $(\alpha, \beta, \epsilon)$ from the disk $\mathbb{D}$ to the disk $\mathbb{D}'$. 
\end{proposition}

\subsection{Computational thickening}\label{sec:computational-thickening}

Consider a partial recursive function $f: \NN \dashrightarrow \NN$. By \cite{kleene1936general}, there exists a Turing machine $M = (Q, q_0, Q_{\textrm{halt}}, \delta)$ computing $f$. Furthermore, by Proposition \ref{prop:reversible-tm}, we can suppose that $M$ is reversible. In this section, we will prove that there exists a dynamical bordism $(W, X): \mathbb{D} \to \mathbb{D}$ such that $\psi_{(W,X)}: \Lambda \to \Lambda$ is the output function of $M$ (recall that the map $\psi_{(W,X)}$ was introduced in Section~\ref{sec:pf-bordisms}).

Now, consider the finite state machine $\mathcal{G}_M$ of $M$ as described in Section \ref{sec:graph-automata}. Without loss of generality, we can modify the state machine $\mathcal{G}_M$ to assume that it satisfies the following properties:
\begin{itemize}

    \item There is a single halting state node $q_f$. If there are more than one halting state nodes, then create a new halting state node and relabel all the former halting states as regular states. Also, create a new node with incoming edges of the form $(s, s, +)$ for all $s \in \mathcal{A}$, connecting all the formerly halting states with the new node, and outgoing vertices of the form $(s, s, -)$ connecting it with the new halting state, as in Figure \ref{fig:creating-halting-state}.

    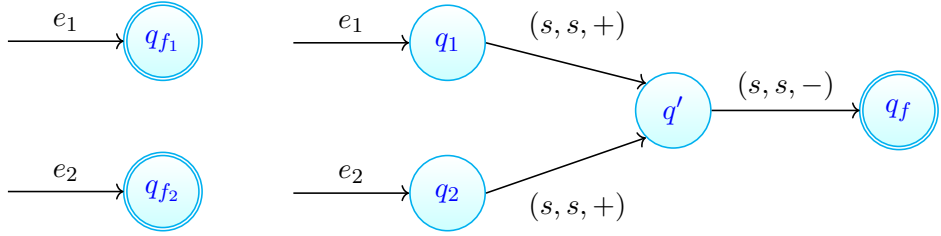
\begin{figure}[h!]
    \centering
    \begin {tikzpicture}[-latex ,auto ,node distance =2 cm and 3cm ,on grid , semithick , state/.style ={ circle ,top color =white , bottom color = processblue!20 ,
draw,processblue , text=blue , minimum width =1 cm}]
\node[state, accepting] (A){$q_{f_1}$};
\node[state, accepting] (B)[below =of A]{$q_{f_2}$};

\draw[<-] (A.west)  -- node[above]{$e_1$} ++(-4em,0em);
\draw[<-] (B.west)  -- node[above]{$e_2$} ++(-4em,0em);
\end{tikzpicture}
\hspace{1cm} 
    \begin {tikzpicture}[-latex ,auto ,node distance =2 cm and 3cm ,on grid , semithick , state/.style ={ circle ,top color =white , bottom color = processblue!20 ,
draw,processblue , text=blue , minimum width =1 cm}]
\node[state] (A){$q_{1}$};
\node[state] (B)[below =of A]{$q_{2}$};
\node[state] (C)[below right =of A, yshift = 1.1cm]{$q'$};
\node[state, accepting] (D)[right =of C]{$q_f$};

\draw[<-] (A.west)  -- node[above]{$e_1$} ++(-4em,0em);
\draw[<-] (B.west)  -- node[above]{$e_2$} ++(-4em,0em);
\path[->] (A.east) edge node[pos=0.2, above right] {$(s,s, +)$} (C.north west);
\path[->] (B.east) edge node[pos=0.2, below right] {$(s,s, +)$} (C.south west);
\path[->] (C.east) edge node[above] {$(s,s, -)$} (D.west);

\end{tikzpicture}
    \caption{Creating a single halting state node. On the left hand side we have a state machine with two halting states, and on the right hand side the result of fusing them into a single halting state.}
    \label{fig:creating-halting-state}
\end{figure}
    \item The starting state $q_0$ has no incoming edges. Otherwise, create a new state $q_0'$ receiving all the incoming edges of $q_0$, and connect them through an auxiliary vertex as in the previous case.
\end{itemize}

After these modifications, the proof will consist in `thickening' the graph $\mathcal{G}_M$ to turn it into a bordism, with disks representing the states and tubes capturing the dynamics of the arrows. In these disks, we will encode the current state of the tape $t$ of the Turing machine $M$ as a point of $\Lambda \subseteq \mathbb{D}$, as explained in Section \ref{sec:cantor-encoding}.

Now, given a reversible finite state machine $\mathcal{G}_M$ satisfying the properties above, we are going to construct a $3$-dimensional CW-complex $\widetilde{W}_{M}$ with a volume-preserving vector field $\widetilde{X}_M$ representing the dynamics of $\mathcal{G}_M$. This CW-complex is constructed as follows.
\begin{enumerate}
    \item We place a disk for each vertex of $\mathcal{G}_M$. We denote the disk associated to the state $q\in Q$ by $\mathbb{D}(q)$.
    \item Consider an edge in $\mathcal{G}_M$ between vertices $q$ and $q'$ and with label $(\alpha, \beta, \epsilon)$, where $\alpha$ is the read symbol, $\beta$ is the written symbol, and $\epsilon = \pm 1$ is the shift. We glue the bordism $W_\epsilon$ to $\mathbb{D}(q)$ and $\mathbb{D}(q')$ to represent the operation $(\alpha,\beta, \epsilon)$ as in Section \ref{sec:representation-basic}, cf. Proposition~\ref{P:basicbord}.
\end{enumerate}

Notice that the loops in $\mathcal{G}_M$ are also glued according to (2). Observe that the glued tubes are disjoint at their incoming boundary, since one transition occurs for every symbol $\alpha$ and thus exactly one tube is glued to each open set $\mathbb{D}_{0,\alpha}(q)$ for every state $q$. Analogously, the glued tubes are also disjoint at their outgoing boundary since $M$ is reversible, so every open set $\mathbb{D}_{\pm 1,\beta}(q')$ receives at most one bordism for every state $q'$.

This observation, together with the fact that the vector field is trivial around every boundary, implies that the vector fields on each of the bordisms glue together to define a smooth volume-preserving vector field $\widetilde{X}_M$ on $\widetilde{W}_M$. To construct the desired vector field on the homotopically equivalent bordism $W_M$, we use an auxiliary smooth manifold with boundary $\widehat W_M$, which is simply a small thickening of $\widetilde W_M$ where we round off the corners. It is enough to ``smooth out'' $\widetilde{X}_M$ only in a neighborhood of the disks, which allows us to define a volume preserving vector field $\widehat{X}_M$ on $\widehat W_M$ as the natural extension of $\widetilde{X}_M$ using that it is trivial around the boundary disks.

Next, we claim that the vector field $\widehat{X}_M$ can be extended to the whole bordism $W_M$ as a volume preserving vector field $X_M$. Indeed, since $\widehat X_M$ is volume preserving and it has zero flux across the boundary of the smooth manifold $\widehat W_M$ (by Gauss theorem), Poincar\'e's lemma~\cite[Corollary 3.2.4]{schwarz2006hodge} implies that there exists a smooth vector field $Y$ such that $\widehat X_M=\textup{curl}(Y)$.

Therefore, taking any smooth extension of the vector field $Y$, yields a volume preserving extension $X_M$ of $\widehat{X}_M$ to $W_M$. It is clear (e.g., by gluing with a suitable volume preserving vector field defined in a neighborhood of $\partial W_M$) that $X_M$  can be assumed to be tangent to the internal boundary $\partial'W_M \subseteq W_M$ and transverse to the other disk boundaries. Moreover, by genericity, we can slightly perturb $X_M$ on $W_M\backslash \widetilde W_M$ so that its zero set consists of finitely many points. The resulting dynamical bordism $(W_M, X_M)$ is obviously clean in the sense of Definition \ref{defn:clean}, since the retract $(\widetilde{W}_M, \widetilde{X}_M)$ was precisely constructed by gluing finitely many basic bordisms (which are computable by construction).

{ Figure \ref{fig:thickeningCW}} depicts the smooth bordism $(W_M, X_M)$ obtained after the thickening of the original CW-complex $(\widetilde{W}_M, \widetilde{X}_M)$. In this figure, red disks represent outgoing boundaries of the basic bordisms, whereas blue disks are incoming boundaries. Observe that the red disks are disjoint, since only one transition occurs for every input, and similarly the blue disks are disjoint since the Turing machine is reversible. However, red disks are not disjoint from blue disks, but they are glued at different sides of the bigger black disks. Indeed, the computational dynamics arise precisely in these intersections between incoming and outgoing boundaries.

\begin{figure}[h]
    \centering
    \includegraphics[width=0.23\linewidth]{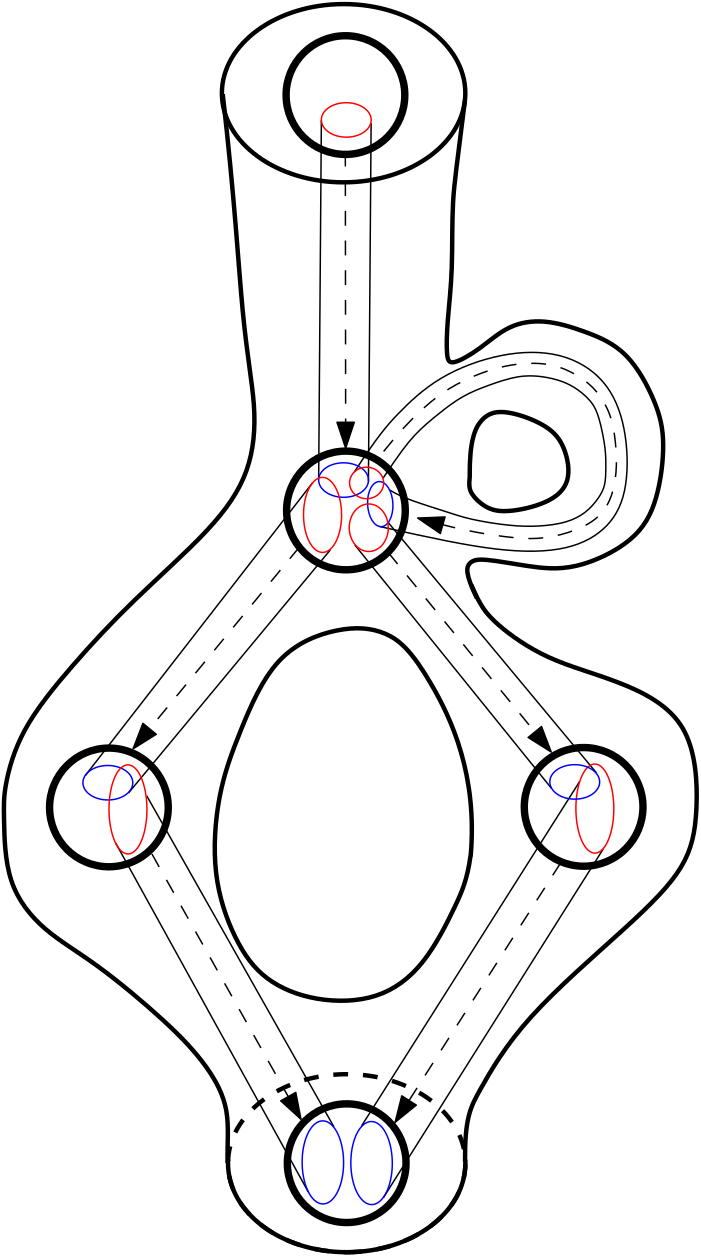}
    \caption{Resulting smooth bordism $(W_M, X_M)$ after the thickening process with the core CW-complex $(\widetilde{W}_M, \widetilde{X}_M)$ inside.}
    \label{fig:thickeningCW}
\end{figure}
    
By construction, $W_M$ is a bordism with boundaries $\mathbb{D}(q_0)$ and $\mathbb{D}(q_f)$ corresponding to the disks attached to the (unique) starting and halting states of $M$. The flow of $X_M$ in $W_M$ satisfies the following properties.

First, given a state $q$, the points belonging to the set $\Lambda \cap \mathbb{D}(q)$ encode the tape state of the Turing machine $M$ when placed at state $q$ as a two-sided binary sequence. Furthermore, suppose that $x \in \Lambda \cap \mathbb{D}(q)$ represents a tape $t$. Let $\alpha = t_0$ be the initial symbol of the tape and $(q', \beta, \epsilon) = \delta(q, \alpha)$ the resulting state $q'$, symbol $\beta$ to write and shifting $\epsilon$ of the transition function of $M$ at this state. This means that there is an edge of the form $(\alpha, \beta, \epsilon)$ that joins $q$ and $q'$ in the state machine of $M$. Then $x$ is glued to a bordism between $\mathbb{D}(q)$ and $\mathbb{D}(q')$ that represents this transition and, as shown in Section \ref{sec:representation-basic}, under this flow it reaches a unique point $x' \in \Lambda \cap \mathbb{D}(q')$ representing the resulting tape after executing the transition.

Now, let $x \in \Lambda \cap \mathbb{D}(q)$ be a point representing a tape $t$ that encodes a natural number $n \in \NN$ in binary. Let $t^0 = t, t^1, t^2, \ldots$ be the sequence of tapes obtained after successively executing $M$, which passes through states $q_0, q_1, q_2, \ldots$ subsequently. By the previous observation, this uniquely corresponds to a sequence of points encoding the tapes $x_0 = x, x_1, x_2, \ldots$, with $x_i \in \Lambda \cap \mathbb{D}(q_i)$ for all $i$, such that the flow of $X_M$ connects them exactly in that order. Then we have two options:
        \begin{itemize}
            \item If the Turing machine $M$ eventually stops, then $q_N = q_f$ for some $N$ large enough, and thus $x_N \in \Lambda \cap \mathbb{D}(q_f)$ is in the outgoing boundary of $W_M$ and encodes the resulting tape $t^N$. By the construction of the Turing machine described in Section \ref{sec:reversible-Turing}, the non-negative part of $t^N$ encodes the output $f(n)$ of the partial recursive function $f$ on $n$, whereas the negative part encodes the input $n$ itself.
            \item If $M$ does not halt, then the sequence $x_i$ is infinite with $x_i \not\in\mathbb{D}(q_f)$ for all $i$. Hence, the flow of $X_M$ traps the orbit inside the bordism.
        \end{itemize}

According to the previous observations, if we consider the partial function
$$
    \psi_{(W_M, X_M)}: \NN^2 \dashrightarrow \NN^2
$$
induced by the vector field $X_M$, we have that $\psi_{(W_M, X_M)}$ encodes the output function of the Turing machine associated to the initial partial recursive function $f: \NN \dashrightarrow \NN$. Hence, $Z_0(W_M, X_M) = f$, finishing the proof of Theorem \ref{thm:main}.

\section{Categorical interpretation: Topological Kleene Field Theories}\label{sec:categorical-interpretation}

The results of this work can be equivalently reformulated in the framework of category theory, where new and interesting perspectives arise. In particular, we will use the language of $2$-categories, also known as bicategories. Intuitively, a $2$-category is a collection of objects $x$, morphisms $f: x \to x'$ between objects (called $1$-morphisms) and morphisms $\alpha: f \Rightarrow f'$ between $1$-morphisms (called $2$-morphisms). For a complete introduction to $2$-categories, see \cite{benabou1967introduction}.

Notice that given $m \geq 1$, we can consider the disjoint union $\NN^{\sqcup m} = \NN \times \{1, \ldots, m\}$ of $m$ copies of $\NN$. For convenience, we also set $\NN^{\sqcup 0} = \emptyset$. Using the natural inclusion, $\NN^{\sqcup m}$ can be seen as a subset of $\NN^m$. With this notion at hand, first we can consider the \emph{$2$-category $\PF$ of partial functions} given by:
\begin{itemize}
    \item Objects: The objects of $\PF$ are the non-negative integers $\ZZ_{\geq 0}$. 
    \item Morphisms: A morphism between objects $m$ and $m'$ is a partial function $f: \NN^{\sqcup m} \dashrightarrow \NN^{\sqcup m'}$. Composition is given by composition of partial functions, with the identity map as the identity morphism.
    \item $2$-Morphisms: Given two morphisms $f: \NN^{\sqcup m} \dashrightarrow \NN^{\sqcup m'}$ and $g: \NN^{\sqcup m} \dashrightarrow \NN^{\sqcup m'}$, a unique $2$-morphism $f \Rightarrow g$ between them exists if $f$ extends $g$, i.e., if $D_{g} \subseteq D_{f}$ and $f|_{D_g} = g$.
\end{itemize}
Notice that $\PF$ is naturally equipped with a monoidal symmetric structure, where $m_1 \otimes m_2 = m_1 + m_2$ on objects. Additionally, given morphisms $f_1: \NN^{\sqcup m_1} \dashrightarrow \NN^{\sqcup m_1'}$ and $f_2: \NN^{\sqcup m_2} \dashrightarrow \NN^{\sqcup m_2'}$ then $f_1 \otimes f_2: \NN^{\sqcup m_1 + m_2} \dashrightarrow \NN^{\sqcup m_1' + m_2'}$ is the partial function given by $f_1$ on the first $m_1$ copies (with image in the first $m_1'$ copies) and $f_2$ on the last $m_2$ copies (and taking values in the last $m_2'$ copies).

Furthermore, inside $\PF$ we find an interesting wide $2$-subcategory, namely, the \emph{category $\PRF$ of partial recursive functions}. Again, the objects of $\PRF$ are the non-negative integers $\ZZ_{\geq 0}$, a morphism between them is a partial recursive function $f: \NN^{\sqcup m} \dashrightarrow \NN^{\sqcup m'}$, and $2$-morphisms are again restrictions of the domain. Notice that compositions of partial recursive functions are again partial recursive functions, showing that $\PRF$ is indeed a subcategory of $\PF$, and it is straightforward to check that the monoidal symmetric structure of $\PF$ also restricts to $\PRF$.

In the same spirit, fixed $n \geq 2$, we can consider the \emph{category $\Bord_n^{\textup{dy}}$ of clean dynamical $n$-bordisms} as the category given as follows:
\begin{itemize}
    \item Objects: An object is a triple $(M, X, \mathbf{\iota})$ of an $n$-dimensional marked dynamical germ. 
    \item Morphism: A morphism $(M_1, X_1, \bm{\iota}_1) \to (M_2, \bm{\iota}_2)$ between marked dynamical germs is an equivalence class of clean dynamical bordisms $(W, X)$ between them. Two such bordisms $(W, X)$ and $(W',X')$ are declared to be equivalent if there exists a boundary preserving diffeomorphism $F: W \to W'$ such that $F_*(X) = X'$.
    \item Composition is given by gluing of bordisms and vector fields along the common boundary.
\end{itemize}
Additionally, by considering only identity $2$-morphisms between bordisms, $\Bord_n^{\textup{dy}}$ can be seen as a $2$-category.

\begin{remark}
    The gluing $W \cup_{X_2} W'$ of two bordisms $(W, X): (M_1, X_1) \to (M_2, X_2)$ and $(W',X'): (M_2, X_2) \to (M_3, X_3)$ does not have a unique smooth structure. In the same vein, even though $X|_{X_2} = X'_{X_2}$, this does not mean that the vector fields $X$ and $X'$ glue to give a smooth vector field. However, by choosing a collaring around $M_2$ and smoothing-out $X$ and $X'$ along this collaring, there is a way of defining a smooth gluing with a smooth vector field. This smoothing-out is not unique in general, but it is unique up to boundary-preserving diffeomorphism, so composition is well-defined in $\Bord_n^{\textup{dy}}$.
\end{remark}

Recall that a functor $F: \mathbf{C} \to \mathbf{D}$ between $2$-categories is an assignment satisfying the same properties as a regular functor between usual categories except that, instead of preserving composition on the nose, there may exist a natural family of $2$-morphisms in $\mathbf{D}$ such that $F(f \circ g) \Rightarrow F(f) \circ F(g)$ for all $1$-morphisms $f$ and $g$ of $\mathbf{C}$.

As constructed in Section \ref{sec:pf-bordisms}, given a dynamical bordism $(W, X): (M, X, \bm{\iota}) \to (M', X', \bm{\iota}')$, we can associate to it a partial function
$$
    Z_0(W, X): \NN^{\sqcup m} \dashrightarrow \NN^{\sqcup m'},
$$
where $m$ and $m'$ are the number of connected components of $M$ and $M'$, respectively. We can promote this assignment to objects by setting $Z_0(M, X, \bm{\iota}) = m$, if $M$ has $m$ connected components. A straightforward check using the description of Section \ref{sec:pf-bordisms} shows the following result.

\begin{proposition}
    The assignment
    $$
        Z_0: \Bord_n^{\textup{dy}} \to \PF
    $$
    is a well-defined monoidal symmetric functor of $2$-categories.
\end{proposition}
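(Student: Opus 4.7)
The plan is to verify the three pieces of data making $Z_0$ a monoidal symmetric $2$-functor: its action on objects and $1$-morphisms (with invariance under the equivalence relation defining the latter), its compatibility with composition via natural $2$-cells, and its compatibility with the monoidal symmetric structures. On objects, the assignment $(M, X, \bm{\iota}) \mapsto |\pi_0(M)|$ is manifestly well-defined since the number of connected components is a topological invariant. For $1$-morphisms, I would check that any boundary-preserving diffeomorphism $F: (W, X) \to (W', X')$ with $F_* X = X'$ and intertwining the marked embeddings conjugates the flows and identifies the Cantor-encoded marked points on the boundaries; consequently the hitting map $\psi_{(W, X)}$ agrees with $\psi_{(W', X')}$ under this identification, so $Z_0(W, X) = Z_0(W', X')$ and the assignment descends to equivalence classes.

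For composition, consider composable bordisms $(W, X): M_1 \to M_2$ and $(W', X'): M_2 \to M_3$. A smoothing of their gluing produces a vector field $\widehat{X}$ on $W \cup_{M_2} W'$ whose reaching function is independent of the choice of smoothing: the transversality of $X$ and $X'$ to $M_2$ forces the flow of any such smoothing to cross $M_2$ in finite time along a trajectory arbitrarily close to the concatenation of the two originals, so the hitting point on $M_3$ is unchanged. Now pick a marked input point $p$ on $M_1$: if $Z_0(W') \circ Z_0(W)$ is defined at $p$, then the $X$-flow from $p$ hits a marked point $q \in M_2$ and the $X'$-flow from $q$ hits a marked point of $M_3$, and the same output is produced by the glued flow of $\widehat{X}$. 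The glued reaching function may additionally be defined when the intermediate crossing at $M_2$ lies outside the Cantor encoding while the final hitting on $M_3$ is still marked, so $Z_0(W \cup_{M_2} W')$ extends $Z_0(W') \circ Z_0(W)$. By the definition of $\PF$, this gives the unique $2$-cell $Z_0(W \cup_{M_2} W') \Rightarrow Z_0(W') \circ Z_0(W)$ required of a lax monoidal $2$-functor; naturality and the coherence axioms are automatic because $\PF$ admits at most one $2$-morphism between any parallel pair of $1$-morphisms.

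For the monoidal symmetric structure, on objects one has $|\pi_0(M_1 \sqcup M_2)| = |\pi_0(M_1)| + |\pi_0(M_2)|$, matching the product in $\PF$; on $1$-morphisms, the flow of $W_1 \sqcup W_2$ decomposes component-wise and every marked input lies in exactly one summand, so $Z_0(W_1 \sqcup W_2) = Z_0(W_1) \otimes Z_0(W_2)$ strictly. The braiding on $\Bord_n^{\textup{dy}}$ is represented by a trivial swap cylinder whose reaching function is precisely the swap on $\NN^{\sqcup m_1} \sqcup \NN^{\sqcup m_2}$, matching the symmetry in $\PF$. I expect the main obstacle to lie in the composition step, namely justifying rigorously that the reaching function of the glued bordism does not depend on the smoothing chosen at the collar of $M_2$; the cleanest approach is probably to argue that any two admissible smoothings produce flows agreeing outside a thin collar around $M_2$ and that transversality, together with continuous dependence of flows on parameters, propagates the hitting point on $M_3$ unchanged through a continuous deformation of smoothings.
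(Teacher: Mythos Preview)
Your argument is correct and matches the paper's approach: the paper does not give a detailed proof, simply stating that the result follows from ``a straightforward check using the description of Section~\ref{sec:pf-bordisms}'', and then records in the subsequent remark precisely the key observation you spell out---that uniqueness of the flow forces $Z_0\big((W',X')\circ(W,X)\big)$ to be an extension of $Z_0(W',X')\circ Z_0(W,X)$, yielding the required $2$-cell in $\PF$. Your treatment of objects, well-definedness on equivalence classes, and the monoidal symmetric compatibility is exactly the verification the paper leaves implicit.

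One simplification: the worry you flag at the end about independence of the reaching function from the choice of collar smoothing is already absorbed by work you did earlier. The paper stipulates (in the remark preceding the definition of $\Bord_n^{\textup{dy}}$) that any two smoothings of a gluing are related by a boundary-preserving diffeomorphism intertwining the vector fields, so they represent the \emph{same} morphism in $\Bord_n^{\textup{dy}}$. Since you have already checked that $Z_0$ is constant on such equivalence classes, smoothing-independence follows immediately---no separate continuity or deformation argument is needed. Your observation that coherence is automatic because $\PF$ has at most one $2$-morphism between any parallel pair is also the right shortcut, and is implicit in the paper's setup.
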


\begin{remark}
Notice that the $2$-category structure plays a crucial role here. Given bordisms $(W, X): (M, X, \bm{\iota}) \to (M', X', \bm{\iota}')$ and $(W', X'): (M', X', \bm{\iota}') \to (M'', X'', \bm{\iota}'')$, the uniqueness of the flow of a smooth vector field shows that $Z_0\left((W', X')\circ (W, X)\right)$ is a partial function extending the composition $Z_0(W', X')\circ Z_0(W, X)$. By definition, this exactly means that there exists a $2$-morphism $Z_0(W', X')\circ Z_0(W, X) \Rightarrow Z_0\left((W', X')\circ (W, X)\right)$, but this is not invertible in principle.
\end{remark}

\begin{definition}
    The functor 
$$
Z_0: \Bord_n^{\textup{dy}} \longrightarrow \PF
$$
    is called the \emph{pseudo-Topological Kleene Field Theory}, pseudo-TKFT for short.
\end{definition}

\begin{definition}\label{defn:TKFT}
    A \emph{Topological Kleene Field Theory}, TKFT for short, is a monoidal subcategory $\mathcal{B} \subseteq \Bord_n^{\textup{dy}}$ such that $Z_0$ restricted to $\mathcal{B}$ factorizes as a full and surjective on objects monoidal functor
    $$
        Z: \mathcal{B} \longrightarrow \PRF.
    $$
    If such a functor exists, we shall say that partial recursive functions are quantizable by means of the category $\mathcal{B}$.
\end{definition}

\begin{remark}
    Definition \ref{defn:TKFT} requires three conditions to be fulfilled. The first one, that $Z_0$ restricts to a functor to $\PRF$, can be spelled out as that we have a commutative diagram
    $$
        \xymatrix{ \Bord_n^{\textup{dy}} \ar[r]^{Z_0} & \PF \\ \mathcal{B} \ar[r]_{Z} \ar@{^{(}->}[u] & \PRF \ar@{^{(}->}[u] }
    $$
    The functor $Z$ must be full. This means that for any object $(M, X, \bm{\iota})$ and $(M', X', \bm{\iota}')$, the assignment
    $$
        Z: \Hom_{\mathcal{B}}((M, X, \bm{\iota}), (M', X', \bm{\iota}')) \to \Hom_{\PRF}(Z(M, X, \bm{\iota}), Z(M', X', \bm{\iota}'))
    $$
    is subjective or, in other words, any partial recursive function is representable through a dynamical bordism. Finally, the condition of being subjective on objects reduces to the fact that $\mathcal{B}$ contains at least one connected object.
\end{remark}

\begin{remark}\label{rmk:category-b0}
    A way of constructing the category $\mathcal{B}$ of Definition \ref{defn:TKFT} is by using a certain `core' subcategory $\mathcal{B}_0$ of $\mathcal{B}$. This category $\mathcal{B}_0$ contains a single object $M = (M, X, \bm{\iota})$, with $M$ connected. It must also have the following properties: \begin{itemize}
        \item For any partial recursive function $f: \NN \dashrightarrow \NN$, there exists an endobordism $(W, X): M \to M$ such that $Z_0(W, X) = f$.
        \item Any endobordism $(W, X): M \to M$ of $\mathcal{B}_0$ satisfies that $Z_0(W, X)$ is a partial recursive function.
    \end{itemize}
    With this information at hand, $\mathcal{B}$ can be constructed as the monoidal closure of $\mathcal{B}_0$, i.e.\ the category made of disjoint unions of objects and bordisms of $\mathcal{B}_0$. Since $Z_0(M^{\sqcup n}) = 1^{\otimes n} = n$, we have that $Z = Z_0|_{\mathcal{B}}$ is subjective on objects and, since $Z_0|_{\mathcal{B}_0}$ takes values in $\PRF$ and $\PRF$ is monoidal, the functor $Z$ also takes values in $\PRF$. Finally, by the very construction of $\mathcal{B}_0$, the functor $Z$ is full.
\end{remark}

In this categorical language, Theorem \ref{thm:main} can be rephrased as follows.

\begin{theorem}\label{thm:fundamental-lemma}
    Topological Kleene Field Theories exist.
\end{theorem}

\begin{proof}
By Theorem \ref{thm:main}, given a partial recursive function, there exists a dynamical endobordism of the standard disk representing it. Now, consider the category $\mathcal{B}_0$ with $\mathbb{D}$ as the only object. For the morphisms, pick a Turing machine $M_f$ computing a partial recursive function $f: \NN \dashrightarrow \NN$ for any $f$, and take as morphisms the compositional closure collection of the bordisms $(W_{M_f}, X_{M_f})$ satisfying $Z_0(W_{M_f}, X_{M_f}) = f$. By construction, $\mathcal{B}_0$ satisfies the conditions of Remark \ref{rmk:category-b0} and thus defines a TKFT, proving the result.
\end{proof}

A TKFT provides an effective method of computing through a dynamical system. Indeed, if $Z: \mathcal{B} \to \PRF$ is a TKFT, then given a partial recursive function $f: \NN \dashrightarrow \NN$, there exists a bordism $(W, X)$ of $\mathcal{B}$ such that $Z(W, X) = f$. This has many implications:
\begin{itemize}
    \item The partial recursive function $f$ can be effectively computed by simulating the flow of $X$ in $W$.
    \item The dynamics of the vector fields in $\mathcal{B}$ manifest a special kind of complexity, in the sense that they are Turing-complete.
    \item Since there exist universal partial recursive functions, then $\mathcal{B}$ also contains a universal bordism $(W_u, X_u)$. Somehow, this means that the dynamics of any other bordism in $\mathcal{B}$ is governed by those of $(W_u, X_u)$.
    \item The dynamics in $\mathcal{B}$ are undecidable. That is, given $(W, X): (M_1, X_1, \bm{\iota}_1) \to (M_2, X_2, \bm{\iota}_2)$ a morphism of $\mathcal{B}$, there exists no algorithm to determine whether the flow of a point starting at $M_1$ will hit $M_2$ or not.
\end{itemize}

{
We finish this section by pointing out that, to the best of our knowledge, it is not known whether the image of the functor $Z_0: \Bord_n^{\textup{dy}} \to \PF$ is always a partial recursive function. In other words, the clean condition on bordisms might force the function represented by a clean dynamical bordism to be computable. This would imply that the subcategory $\mathcal{B}$ of Definition \ref{defn:TKFT} can be taken to be the full $\Bord_n^{\textup{dy}}$.

\begin{theorem}\label{thm:clean-kleen}
    The reaching function of any clean dynamical bordism is a partial recursive functor. In other words, there exists a monoidal symmetric functor of $2$-categories
    $$
        Z: \Bord_n^{\textup{dy}} \to \PRF,
    $$
    with values in the category of partial recursive functions.
\end{theorem}
}

\section{Conclusions}\label{sec:conclusions}

{The work presented in this paper establishes a new connection between dynamical systems and computability by representing computable functions through the flows of vector fields on bordisms. These ideas raise interesting questions to be explored in future research. Indeed, as a result of this work, we have proven that TKFTs represent a fully functional model of computation, on the same footing as Turing machines and partial recursive functions. However, the dynamical nature of the construction enables new perspective to address classical problems of computation through a novel glass.

Firstly, TKFTs also preclude that some functions might be speeded up using dynamical bordisms. It is well known that some beyond-Turing methods, such as Turing machines with advice, can be represented as area-preserving diffeomorphisms of the disk (cf.\ \cite[Corollary 3.3]{cardona2024hydrodynamic}). In this sense, we expect that some ``unconventional'' diffeomorphisms of the disk may be used to speed-up the computation, the same way the advice accelerates the computation in super-Turing models. This emerging feature of clean dynamical bordisms would demonstrate that TKFT already exceeds the efficiency of Turing machines.

Directly related to this problem, the method developed in this paper links the complexity of a computable function with the topology of the bordism representing it. As is evident from the proof, a pair-of-pants, as a bordism between one disk and two disks, should be understood as an `if' instruction in the computation, whereas a nontrivial handle represents a `loop' construction.
In fact, the method of proof creates a bordism that has as strong deformation retract the graph underlying a Turing machine computing the function, so the homology invariants of the bordisms, and in particular its first Betti number, are forced to agree with the ones of the graph. In this direction, a question arises: is it possible to (algorithmically) construct a clean dynamical bordism computing a partial recursive function $f$ with strictly smaller first Betti number than that of any Turing machine computing $f$? {This is particularly interesting taking into account that TKFTs are equivalent to Turing machines, since using clean bordisms we will be able to compute exactly the same partial recursive functions, but potentially improving the complexity of the calculation.}

A positive answer to these questions would imply that dynamical bordisms offer a new framework for computability that is strictly more efficient than standard Turing machines, analogous to the way quantum computing provides a more efficient approach to computation. In this context, an alternative approach would be to employ stochastic vector fields on the bordism. In this framework, the flow would be probabilistic rather than deterministic, mimicking the stochastic nature of quantum computing. Indeed, in a certain sense, the topological nature of bordisms resembles the bifurcations observed in Feynman diagrams. Consequently, it could be anticipated that the capabilities of dynamical bordisms might match, if not surpass, those of quantum computing.

}

\bibliographystyle{abbrv} %
\bibliography{bibliography}

\printauthorinfo

\end{document}